%!TEX TS-program = pdflatexmk
%%%%%%%%%%%%%%%%%%%%%%%%%%%%%%%%%%%%%%%%%%%%%%%%%%%%%%%%%%%%%%%%%%%%%%%%%%%
%
%		Annihilator of minimal rep. of O(p,q) associated to 
%		the finite-dim. rep. of sl2
%
%		by thashi
%
%       ver.1  
%       1/15/2021
%       Just a note	
%  
%%%%%%%%%%%%%%%%%%%%%%%%%%%%%%%%%%%%%%%%%%%%%%%%%%%%%%%%%%%%%%%%%%%%%%%%%%%
%\documentclass[a4paper,reqno,12pt,dvipdfmx]{amsart}
\documentclass[a4paper,reqno,12pt]{amsart}   %for pdflatex
%SetFonts
% newtxtext+newtxmath
\usepackage{newtxtext} %loads helv for ss, txtt for tt
\usepackage{amsmath}
\usepackage[bigdelims]{newtxmath}
\usepackage[T1]{fontenc}
\usepackage{textcomp}
%SetFonts

%\usepackage{amsmath,amsthm,amssymb}
\usepackage{tikz}
\usepackage{tikz-cd}
\usepackage{mathrsfs}
\usepackage{ytableau}

\ytableausetup{boxsize=5pt}

\tikzcdset{
	arrow style=tikz,
	diagrams={>=stealth}
%	diagrams={>={Straight Barb[scale=0.8]}}
}

\textwidth=5.76in
\oddsidemargin=0.26in
\evensidemargin=0.26in
\parindent=8mm
\baselineskip=2\baselineskip

%
%%%%%%%%%%%%%%%%%%%%%%%%%%%%%%%%%%%%%%%%%%%%%%%%%%%%%%%%%%%%%%%%%%%%%
%	counter 
%%%%%%%%%%%%%%%%%%%%%%%%%%%%%%%%%%%%%%%%%%%%%%%%%%%%%%%%%%%%%%%%%%%%%

%\setcounter{section}{-1}
\numberwithin{equation}{section}	% <-- section numbers are included in equation number 

%%%%%%%%%%%%%%%%%%%%%%%%%%%%%%%%%%%%%%%%%%%%%%%%%%%%%%%%%%%%%%%%%%%%%
%	theorem enviroment
%%%%%%%%%%%%%%%%%%%%%%%%%%%%%%%%%%%%%%%%%%%%%%%%%%%%%%%%%%%%%%%%%%%%%

%%%%%%%%%% theoremstyle = plain %%%%%%%%%%%%%%%%%

%%% Theorem %%%
\theoremstyle{plain}

\newtheorem{theorem}{Theorem}[section]      % <-- section numbers are included
\newtheorem*{theorem*}{Theorem}

%%% Lemma %%%
%\newtheorem{lem}[thm]{Lemma}
\newtheorem{lemma}[theorem]{Lemma}

%%% Proposition %%%
%\newtheorem{prop}[thm]{Proposition}
\newtheorem{proposition}[theorem]{Proposition}

%%% Corollary %%%
%\newtheorem{cor}[thm]{Corollary}
\newtheorem{corollary}[theorem]{Corollary}

%%% Conjecture %%%

\newtheorem*{conjecture*}{Conjecture}

%%% Facts %%%
\newtheorem{facts}[theorem]{Facts}
\newtheorem*{facts*}{Facts}

%%%%%%%%%% theoremstyle = definition %%%%%%%%%%%%%%%%%

%%% Definition %%%
\theoremstyle{definition}
\newtheorem{definition}[theorem]{Definition}

%%% Example %%%
%\newtheorem{example}[thm]{Example}

%%% Problem %%%

%%%%%%%%%% theoremstyle = remark %%%%%%%%%%%%%%%%%

%%% Remark %%%
\theoremstyle{remark}
\newtheorem{remark}[theorem]{Remark}

%%% Remarks %%%
%\newtheorem{remarks}[thm]{Remarks}

%%% Acknowlegment %%%

%%%%%%%%%%%%%%%%%%%%%%%%%%%%%%%%%%%%%%%%%%%%%%%%%%%%%%%%%%%%%%%%%%%%
%	My Macros for AMS-LaTeX
%
%	by thashi
%%%%%%%%%%%%%%%%%%%%%%%%%%%%%%%%%%%%%%%%%%%%%%%%%%%%%%%%%%%%%%%%%%%%

%%%%%%%%%%%%%%%%%%%%%%%%%%%%%%%%%%%%%%%%%%%%%%%%%%%%%%%%%%%%%%%%%%%%
%	short stroke 
%%%%%%%%%%%%%%%%%%%%%%%%%%%%%%%%%%%%%%%%%%%%%%%%%%%%%%%%%%%%%%%%%%%%

%%%%%%%%%%%%%%%%%%%%%%%%%%%%%%%%%%%%%%%%%%%%%%%%%%%%%%%%%%%%%%%%%%%%
%   Display convention                                             %
%%%%%%%%%%%%%%%%%%%%%%%%%%%%%%%%%%%%%%%%%%%%%%%%%%%%%%%%%%%%%%%%%%%%

         % left sup index

%%%%%%%%%%%%%%%%%%%%%%%%%%%%%%%%%%%%%%%%%%%%%%%%%%%%%%%%%%%%%%%%%%%%
%	math symbol abbreviation                                   %
%%%%%%%%%%%%%%%%%%%%%%%%%%%%%%%%%%%%%%%%%%%%%%%%%%%%%%%%%%%%%%%%%%%%
\def\geqsl{\geqslant}
\def\leqsl{\leqslant}

\renewcommand{\setminus}{\smallsetminus}  % difference of two sets

\newcommand{\trace}[1]{\ensuremath{\operatorname{tr}{\left( #1 \right)}}}

\newcommand{\tp}[1]{\ensuremath{\hspace{1.8truept}\vphantom{1}^{t}\hspace{-1pt}#1}}
%\newcommand{\ttp}[1]{\ensuremath{#1^{\sharp}}}

                                % alternating matrices
   
                                % anti-alternating matrices
   
                                % symmetric matrices
\newcommand{\diag}{\ensuremath{\operatorname{diag}}}

                             % principal symbol of Pfaffian

                             % Functor of Lie groups to Lie algebras

                             % real rank

  % Adjoint
  % adjoint

                             % matrix unit (argument=size)

%%%%%%%%%%%%%%%%%%%%%%%%%%%%%%%%%%%%%%%%%%%%%%%%%%%%%%%%%%%%%%%%%%%%
%   abbreviation for diagram symbol                                %
%%%%%%%%%%%%%%%%%%%%%%%%%%%%%%%%%%%%%%%%%%%%%%%%%%%%%%%%%%%%%%%%%%%%

%%%%%%%%%%%%%%%%%%%%%%%%%%%%%%%%%%%%%%%%%%%%%%%%%%%%%%%%%%%%%%%%%%%%
%   character convention                                           %
%%%%%%%%%%%%%%%%%%%%%%%%%%%%%%%%%%%%%%%%%%%%%%%%%%%%%%%%%%%%%%%%%%%%
	%

	% 
%\def\H{\mathcal H}	% 
	% 

%\renewcommand{\Omega}{\ensuremath{\varOmega}}
%\renewcommand{\Xi}{\ensuremath{\varXi}}
%\renewcommand{\Theta}{\ensuremath{\varTheta}}

\newcommand{\N}{\ensuremath{\mathbb N}}

\newcommand{\R}{\ensuremath{\mathbb R}}
\newcommand{\C}{\ensuremath{\mathbb C}}

%*** Lie algebras ***********
\newcommand{\gl}{\ensuremath{\mathfrak {gl}}}
                                % general linear Lie algebra
\newcommand{\mathfraksl}{\ensuremath{\mathfrak {sl}}}
                                % special linear Lie algebra

                                % orthogonal linear Lie algebra

                                % special orthogonal linear Lie algebra

                                % symplectic Lie algebra

\newcommand{\g}{\ensuremath{\mathfrak{g}}}   
   % Parabolic subalgebra   
   % Parabolic subalgebra   
   % Cartan subalgebra   

%*** Lie groups *************
\newcommand{\GL}[1]{\ensuremath{\mathrm{GL}_{#1}}}  % General Linear group
\newcommand{\SL}[1]{\ensuremath{\mathrm{SL}_{#1}}}  % Special Linear group
          % Orthogonal group
      % Orthogonal star group
          % Special Orthogonal group
            % Unitary group
          % Special Unitary group
\newcommand{\Sp}{\ensuremath{\mathrm{Sp}}}          % Symplectic group
          % Metaplectic group
    % Special Orthogonal star group

          % 
          % Borel of \G
          % parabolic of \G

          % Weyl group og \G
%\def\S{\boldsymbol{S}}          % generators of \W
          % monoid of \W

%\def\gc{\g_{\C}}

%******************

%***********************

	% induced representation

		% real part
		% imaginary part

		% codimension
		% codimension
		% cokernel

%***** derivatives *****
\newcommand{\rmd}{\ensuremath{\operatorname{d}\!}}
\newcommand{\pd}{\ensuremath{\partial}}

%%%%%%%%%%%%%%%%%%%%%%%%%%%%%%%%%%%%%%%%%%%%%%%%%%%%%%%%%%%%%%%%%%%%
%   character convention                                           %
%%%%%%%%%%%%%%%%%%%%%%%%%%%%%%%%%%%%%%%%%%%%%%%%%%%%%%%%%%%%%%%%%%%%
%%%%%%%%%%%%%%%%%%%%%%%%%%%%%%%%%%%%%%%%%%%%%%%%%%%%%%%%%%%%%%%%%%%%
%   composition, string and formula convention                     %
%%%%%%%%%%%%%%%%%%%%%%%%%%%%%%%%%%%%%%%%%%%%%%%%%%%%%%%%%%%%%%%%%%%%

	% square of norm

\def\<{\langle}
\def\>{\rangle}

%***** vector notation **********

%***** inner product **********
      % inner product <|>

%***** norms  ********** 
  % norm on H

\def\c2vec#1#2{ %
   \left[ \begin{smallmatrix} % 
           #1 \\ #2  \end{smallmatrix} %
   \right]}

%%%%%% end of my macros %%%%%%%%%%%%%%%%%%%%%%%%%%%%%%%%%%%%%%%%%%%%%

%\newcommand{\ibar}{\ensuremath{p+i}}
%\newcommand{\jbar}{\ensuremath{p+j}}
\DeclareMathOperator{\impart}{Im}

\DeclareMathOperator{\spanned}{span}

\DeclareMathOperator{\gr}{gr}

\DeclareMathOperator{\Ann}{Ann}

% use 'top*matrix' with delarray package
%\newenvironment{topbmatrix}{\begin{array}[t][{*{10}{c}}]}{\end{array}}

\newcommand{\rmO}{\mathrm O}
\newcommand{\gnat}{{\mathfrak o}_n}
\newcommand{\nat}[1]{\ensuremath{{#1}^{\gnat}}}

\newcommand{\ai}{\ensuremath{\sqrt{-1}\,}}

\newcommand{\abs}[1]{\ensuremath{\left|\, #1 \, \right|}}

\newcommand{\PD}{\ensuremath{\mathscr{P}\!\mathscr{D}}}

\newcommand{\Eu}{\ensuremath{E}}

\newcommand{\Harm}{\ensuremath{\mathscr H}}
\newcommand{\ourE}{\ensuremath{\mathscr E}}
\newcommand{\Module}[2]{\ensuremath{\mathit M}^{#1}(#2)}
\newcommand{\Anni}[2]{\ensuremath{\mathscr I}^{#1}(#2)}

\newcommand{\Cspan}[1]{\mbox{\C -}\! \spanned \left\{ {#1} \right\} }

\newcommand{\mwedge}[1]{\ensuremath{\mbox{$\bigwedge^{\!#1}$}}}
\newcommand{\sfn}{\ensuremath{\mathsf n}}

\begin{document}

%%%%%% header %%%%%%%%%%%%%%%%%%%%%%%%%%%%%%%%%%%%%%%%%%%%%%%%%%%%%%%%
\title[Annihilator of $(\g,K)$-modules of $\rmO(p,q)$]
%{Annihilator of $\mathfrak{o}(p,q)$-modules associated to the finite-dimensional representation of $\mathfraksl_2$}
{Annihilator of $(\g,K)$-modules of $\rmO(p,q)$ associated to the finite-dimensional representation of $\mathfrak{sl}_2$}

\author{Takashi Hashimoto}
%\thanks{}

\address{
  Center for Data Science Education, 
  Tottori University, 
  4-101, Koyama-Minami, Tottori, 680-8550, Japan
}
\email{thashi@tottori-u.ac.jp}
\date{\today}
\keywords{%
  indefinite orthogonal group, minimal representation, annihilator, Joseph ideal, Casimir element%
}
\subjclass[2010]{Primary: 22E46, 17B20, 17B10}
%%%%%%%%%%%%%%%%%%%%%%%%%%%%%%%%%%%%%%%%%%%%%%%%%%%%%%%%%%%%%%%%%%%%%%

\begin{abstract}
Let $\g$ denote the complexified Lie algebra of $G=\rmO(p,q)$ and $K$ a maximal compact subgroup of $G$.
In the previous paper, we constructed $(\g,K)$-modules associated to the finite-dimensional representation of $\mathfraksl_2$ of dimension $m+1$, which we denote by $\Module{+}{m}$ and $\Module{-}{m}$.
The aim of this paper is to show that the annihilator of $\Module{\pm}{m}$ is the Joseph ideal if and only if $m=0$.
We shall see that an element of the symmetric of square $S^{2}(\g)$ that is given in terms of the Casimir elements of $\g$ and the complexified Lie algebra of $K$ plays a critical role in the proof of the main result.
\end{abstract}

\maketitle

\section{Introduction}

For a complex simple Lie algebra $\g$, Joseph constructed the so-called Joseph ideal in \cite{Joseph76}, which is a completely prime two-sided primitive ideal in the universal enveloping algebra $U(\g)$ of $\g$ and has the associated variety of minimal dimension. 
He also proved that it is unique if $\g$ is not of type $A$ (see also \cite{GS04}).
An irreducible unitary representation of a simple Lie group is called a minimal representation if its annihilator in $U(\g)$ is equal to the Joseph ideal. %relation of minimal representation and dimension of the associated variety...

The best known example of the minimal representation is the oscillator (or Segal-Shale-Weil) representation of the metaplectic group, the double cover of the symplectic group $\Sp(n,\R)$.
It was shown in \cite{harm_oscil} that the canonical quantization of the moment map on real symplectic vector spaces gives rise to the underlying $(\g,K)$-module of the oscillator representation of real reductive Lie groups $G$,
where $\g$ denotes the complexified Lie algebra of $G$ and $K$ a maximal compact subgroup of $G$.

In the case of $G=\rmO(p,q)$, the indefinite orthogonal group, applying this method to the symplectic vector space $( (\C^{p+q})_{\R}, \omega)$ with $\omega(z,w)=\impart(z^{*} I_{p,q} w)$, on which $G$ acts via matrix multiplication in a Hamiltonian way with moment map (see below for details), 
and making use of the fact that $(\rmO(p,q), \SL{2}(\R))$ is a dual pair in $\Sp(p+q,\R)$, 
the author constructed $(\g,K)$-modules $\Module{+}{m}$ and $\Module{-}{m}$ of $\rmO(p,q)$ associated to the finite-dimensional representation of $\mathfraksl_{2}$ of dimension $m+1$ for any non-negative integer $m$ in \cite{min_rep}.
Note that $\Module{+}{0}=\Module{-}{0}$ by definition and $\Module{+}{m}$ is naturally isomorphic to $\Module{-}{m}$ in fact.
Moreover, under the condition
\begin{equation}
\label{e:assumpntion_on_pq in intro}
	p \geqsl 2,\; q \geqsl 2,\quad p+q \in 2\N \quad \text{and}\quad  m+3 \leqsl \frac{p+q}2,
\end{equation}
he obtained the $K$-type formula of $\Module{\pm}{m}$, thereby showed that they are irreducible and that $\Module{\pm}{0}$ is the underlying $(\g,K)$-module of the minimal representation of $\rmO(p,q)$.
Although it is well known that the Gelfand-Kirillov dimension of the minimal representation of $\rmO(p,q)$ is equal to $p+q-3$ (see, e.g., \cite{Kobayashi_Orsted_03-2,ZH97}),
we remark that the $K$-type formula of $\Module{\pm}{m}$ mentioned above implies that they all have the Gelfand-Kirillov dimension equal to $p+q-3$, whereas they have the Bernstein degree equal to $4(m+1)\binom{p+q-4}{p-2}$. %$\frac{4(m+1)(p+q-4)!}{(p-2)!(q-2)!}$

The original object of this project is to clarify the relation between our $(\g,K)$-modules $\Module{\pm}{m}$ and the minimal representation of $\rmO(p,q)$ more directly.
In fact, we shall see that the annihilator in the universal enveloping algebra $U(\g)$ of $\Module{\pm}{m}$, denoted by $\Anni{\pm}{m}$ in this paper, is the Joseph ideal if and only if $m=0$, which is the main result of this paper (Theorem \ref{t:main result}).
We use a criterion due to Garfinkle \cite{Garfinkle82} (see also \cite{GS04}) to determine whether $\Anni{\pm}{m}$ is the Joseph ideal or not.
We will see that an element of the symmetric square $S^2(\g)$, which we denote by $\Xi$ below, plays a critical role in the proof of our main result.
We remark that $\Xi$ is given in terms of the Casimir elements of $\g$ and $\mathfrak k$, where $\mathfrak k$ is the complexified Lie algebra of the maximal compact subgroup $K$.

The rest of the paper is organized as follows.
In Section 2 we briefly review the construction of $(\g,K)$-modules $\Module{\pm}{m}$ of $\rmO(p,q)$ mentioned above as well as their properties needed to prove our main results.
In Section 3 we prove our main result using the criterion due to Garfinkle.
In order to keep the present paper self-contained, we include some standard facts about irreducible decomposition of $S^2({\mathfrak o}_n)$ in the Appendix.

\subsection*{Notation}
Let $\N$ denote the set of non-negative integers.
For positive integers $p$ and $q$, we set $[p]:=\{1,2,\dots,p\}$ and $p+[q]:=\{p+1,p+2,\dots,p+q\}$.

%%%%%%%%%%%%%%%%%%%%%%%%%%%%%%%%%%%%%%%%%%%%%%%%%%%%%%%%%%%%%%%%%%%%%%%%%%%%%%%%%%%%%%%%%%%%%%%%%%%%%%%%%%%%%%%%%%%%%%%%%%%%%%%%%%%%%%%%%%%%%%%%%%%%%%
%
%	section 1
%
%%%%%%%%%%%%%%%%%%%%%%%%%%%%%%%%%%%%%%%%%%%%%%%%%%%%%%%%%%%%%%%%%%%%%%%%%%%%%%%%%%%%%%%%%%%%%%%%%%%%%%%%%%%%%%%%%%%%%%%%%%%%%%%%%%%%%%%%%%%%%%%%%%%%%%
\section{$(\g,K)$-modules of $\rmO(p,q)$ associated to the finite-dimensional representation of $\mathfrak{sl}_{2}$}
\label{s:sec1}

Throughout this paper, we denote by $G$ the indefinite orthogonal group $\rmO(p,q)$ realized by
\[
  \rmO(p,q) = \left\{ g \in \GL{n}(\R) ; \tp{g} I_{p,q} g = I_{p,q} \right\},
\]
where $n=p+q$ and $I_{p,q} = \left[ \begin{smallmatrix} 1_p & \\  & -1_q \end{smallmatrix} \right]$.
Let $\g={\mathfrak o}(p,q) \otimes \C$, the complexified Lie algebra of $G$.
Thus,
\[
	\g = \left\{ X \in \gl_{n}(\C) ; \tp{X} I_{p,q} + I_{p,q} X = O \right\}.
\]
Needless to say, our Lie algebra $\g$ is $\C$-isomorphic to the Lie algebra $\gnat$ consisting of all complex $n \times n$ alternating matrices:
\[
	\gnat = \left\{ X \in \gl_{n}(\C) ; \tp{X} + X = O \right\}.
\]

Let $K$ be a maximal compact subgroup of $G$ given by 
\[
  K = \left\{ \begin{bmatrix} a & 0 \\ 0 & d \end{bmatrix} \in G \, ; a \in \rmO(p), d \in \rmO(q) \right\}
    \simeq \rmO(p) \times \rmO(q).
\]
Denote the Lie algebra of $K$ and its complexification by $\mathfrak k_{0}$ and $\mathfrak k$ respectively.
Then $\mathfrak k \simeq \mathfrak o_{p} \oplus \mathfrak o_{q}$.

Let us fix a nondegenerate invariant bilinear form $B$ defined both on $\g$ and on $\gnat$ as follows:
\begin{equation}
	B(X,Y)=\frac12 \trace{XY}.
\end{equation}
Let $\{ X_{i,j}\}_{i,j \in [n]}$ be the generators of $\g$ given by
\begin{align}
\label{e:basis4g}
  X_{i,j} &= \epsilon_{j} E_{i,j} - \epsilon_{i} E_{j,i} 
\end{align}
for $i,j=1,2,\dots,n$, where $E_{i,j}$ stands for the matrix unit and
\begin{equation}
	\epsilon_{i} = \begin{cases} 1 & \text{if} \; i \in [p], \\ -1 &\text{if} \; i \in p+[q]. \end{cases}
\end{equation}
Then one sees $X_{j,i}=-X_{i,j}$ and
\[
	\mathfrak k=\bigoplus_{\substack{i,j \in [p] \\ i<j}} \C X_{i,j} \oplus \bigoplus_{\substack{i,j \in p+[q] \\ i<j}} \C X_{i,j}.
\]
In addition, if one sets
\[
	\mathfrak p=\bigoplus_{\substack{i \in [p] \\ j \in p+[q]}} \C X_{i,j},
\]
then $\g=\mathfrak k \oplus \mathfrak p$ is the complexified Cartan decomposition.
Finally, the dual generators $\{X_{i,j}^{\vee}\}_{i,j}$ to $\{X_{i,j}\}_{i,j}$ with respect to $B$, i.e., those satisfying $B(X_{i,j}, X_{k,l}^{\vee})=\delta_{i,k}\delta_{j,l}$, are given by
\begin{equation}
\label{e:dual_elements}
	X_{i,j}^{\vee} = \begin{cases} -X_{i,j} &\text{if\;} i,j \in [p], \text{\;or if\;} i,j \in p+[q], \\ X_{i,j} &\text{otherwise}. \end{cases}
\end{equation}

Let $V$ denote the vector space $\R^{p+q}$ equipped with a quadratic form $\eta$ defined by
\[
	\eta(v) = \tp{v} I_{p,q} v	\quad (v \in \R^{p+q}).
\]
Then $G$ acts on $V$ by matrix multiplication, leaving $\eta$ invariant.
It is well known that the space $\C[V]$ of polynomial functions on $V$ decomposes as follows (see e.g.~\cite{GW98}): 
for $v=\tp{(x_{1},\dots,x_{p},y_{1},\dots,y_{q})} \in V$, one sees that
\begin{align}
 \C[V]  & \simeq \C[x_1,\dots,x_p] \otimes \C[y_1,\dots,y_q] 
           \notag \\
      & \simeq \bigoplus_{k=0}^{\infty} \left( \C [ r_x^{2} ]  \otimes \Harm^k(\R^p) \right) 
                  \otimes \bigoplus_{l=0}^{\infty} \left( \C [ r_y^{2} ] \otimes \Harm^l(\R^q) \right) 
           \notag \\
      & \simeq  \bigoplus_{k,l=0}^{\infty}  \Harm^k(\R^p) \otimes \Harm^l(\R^q) \otimes \C [ r_x^2, r_y^2 ],
\label{e:decompositionC[V]}
\end{align}
where $r_{x}^{2}=x_{1}^{2}+\cdots+x_{p}^{2},\,r_{y}^{2}=y_{1}^{2}+\cdots+y_{q}^{2}$ and $\Harm^k(\R^{p})$ denotes the space of homogeneous harmonic polynomials on $\R^{p}$ of degree $k$ and so on.

Taking \eqref{e:decompositionC[V]} into account, let us introduce our space $\ourE$ of functions on $V$ by
\begin{equation}
\label{e:ambient_function_space}
  \ourE := \bigoplus_{k,l=0}^\infty {\Harm}^k(\R^p) \otimes {\Harm}^l(\R^q) \otimes \C \llbracket r_x^2,r_y^2 \rrbracket,
\end{equation}
where $\C \llbracket r_{x}^{2},r_{y}^{2} \rrbracket$ denotes the ring of formal power series in $r_{x}^{2}$ and $r_{y}^{2}$, and the direct sum is algebraic;
we assume that the radial part should be a formal power series rather than a polynomial so that we can find a highest weight vector or a lowest weight vector with respect to the $\mathfraksl_{2}$-action \eqref{e:TDS} given below.

%%%%%%%%%%%%%%%%%%%%%%%%%%%%%%%%%%%%%%%%%%%%%%%%%%
%     
%     Definition:
%       K-homogeneous element
%
%%%%%%%%%%%%%%%%%%%%%%%%%%%%%%%%%%%%%%%%%%%%%%%%%%
\begin{definition}
For an element $f \in \ourE$ of the form $f=h_{1}(x) h_{2}(y) \phi(r_{x}^{2},r_{y}^{2})$ with $h_{1} \in \Harm^{k}(\R^{p}),\;h_{2} \in \Harm^{l}(\R^{q})$ and $\phi(r_{x}^{2},r_{y}^{2}) \in \C \llbracket r_{x}^{2},r_{y}^{2} \rrbracket$,
we set $\kappa_{+}=\kappa_{+}(f)=k+{p}/2,\; \kappa_{-}=\kappa_{-}(f)=l+{q}/2$, and
\begin{equation}
	\kappa(f) = (\kappa_{+},\kappa_{-}) = \left( k+\frac{p}2, l+\frac{q}2 \right),
\end{equation}
which we call the \emph{$K$-type} of $f=h_{1}(x) h_{2}(y) \phi(r_{x}^{2},r_{y}^{2})$ in this paper by abuse.

Furthermore, for non-negative integers $k,l \geqsl 0$ given, %for brevity,
we say that an element of $\ourE$ is \emph{$K$-homogeneous with $K$-type $(\kappa_{+},\kappa_{-})=(k+p/2,l+q/2)$} if it is a linear combination of elements whose $K$-types are all equal to $(\kappa_{+},\kappa_{-})$.
%where $\kappa_{+}=k+{p}/2,\, \kappa_{-}=l+{q}/2$. 
\end{definition}

Denoting the ring of polynomial coefficient differential operators on $V$ by $\PD(V)$, let us define a representation $\pi: U(\g) \to \PD(V)$ by
\begin{equation}
\label{e:varpi}
\pi(X_{i,j}) = \begin{cases}
					x_i \pd_{x_j} - x_j \pd_{x_i}  				&\text{if} \quad i,j \in [p], \\%[3pt]
					- y_{i'} \pd_{y_{j'}} + y_{j'} \pd_{y_{i'}}  		&\text{if} \quad i,j \in p+[q], \\%[3pt]
					-\ai ( x_i y_{j'} + \pd_{x_i} \pd_{y_{j'}})   		&\text{if} \quad i \in [p], j \in p+[q], \\
					\ai ( x_j y_{i'} + \pd_{x_j} \pd_{y_{i'}} )		&\text{if} \quad i \in p+[q], j \in [p],
				\end{cases}
\end{equation}
where we set $i':=i-p, \; i \in p+[q]$, for short.
It is easy to verify that $\pi$ indeed defines a representation of $U(\g)$.
It is also clear that the action of $\mathfrak{k}_{0}$ lifts to the one of $K$, i.e., if $X \in \mathfrak k_{0}$ then
\begin{equation}
	\big( \pi(X)f \big) (v) = \left. \frac{\rmd}{\rmd t} \right|_{t=0} f \big( \exp (-t X) v \big) \quad (f \in \ourE, v \in V).
\end{equation}

\begin{remark}
\label{r:canonical quantization}
It was shown in \cite{min_rep} that if one regards $V$ as a Lagrangian subspace of the symplectic vector space $( (\C^{p+q})_{\R},\omega )$ given by
\[
	V=\<e_{1},e_{2},\dots,e_{p},\ai e_{p+1}, \ai e_{p+2},\dots, \ai e_{p+q} \>_{\R},
\]
where $e_{i}=\tp{(0,\dots,\overset{i\text{-th}}{1},\dots,0)} \in \C^{p+q}$ and
\[
	\omega(z,w)=\impart ({z}^{*} I_{p,q} w) \quad (z,w \in \C^{p+q}),
\]
then one obtains the representation $\pi$ given in \eqref{e:varpi} via the canonical quantization of the moment map on $( (\C^{p+q})_{\R},\omega )$.
\end{remark}

Define elements $H,X^{+},X^{-}$ of $\PD(V)$ by
\begin{equation}
\label{e:TDS}
 H = -\Eu_x - \frac{p}2 + \Eu_y +\frac{q}2, \quad
 X^{+} = -\frac12 ( \Delta_x + r_y^2 ), \quad
 X^{-} = \frac12 ( r_x^2 + \Delta_y ),
\end{equation}
where 
\begin{equation}
\label{e:TDS_in_x_and_y}
\begin{aligned}
 \Eu_x &= \sum_{i \in [p]} x_i \pd_{x_i},  & \quad   r_x^2 &= \sum_{i \in [p]} x_i^2,  & \quad  \Delta_x &= \sum_{i \in [p]} \pd_{x_i}^2,  \\
 \Eu_y &= \sum_{j \in [q]} y_j \pd_{y_j},  & \quad   r_y^2 &= \sum_{j \in [q]} y_j^2,  & \quad  \Delta_y &= \sum_{j \in [q]} \pd_{y_j}^2.
\end{aligned}
\end{equation}
Then they satisfy that
\begin{equation}
 [H, X^{+}] = 2 X^{+}, \quad [H, X^{-}] = -2 X^{-}, \quad [X^{+}, X^{-}] = H.
\notag
\end{equation}
Namely, $\g':=\Cspan{ H, X^{+}, X^{-} }$ forms a Lie subalgebra of $\PD(V)$ isomorphic to $\mathfraksl_2$.
Moreover, $\g'$ forms the commutant of $\g$ in $\PD(V)$:
\[
	[Y,Z]=0 \quad \text{for all}\;Y \in \pi(\g) \; \text{and}\; Z \in \g'.
\]

In view of the fact that $(\rmO(p,q),\mathrm{SL}(2,\R))$ is a dual pair in $\Sp(n,\R)$,  
we introduce $(\g,K)$-modules of $\rmO(p,q)$ associated to the finite-dimensional representation of $\mathfraksl_{2}$ as follows.
%%%%%%%%%%%%%%%%%%%%%%%%%%%%%%%%%%%%%%%%%%%%%%%%%%%%%%%%%%%%%%%%%%%%%%
%
%    Definition of our main object: M^{\pm}(m)
%
%%%%%%%%%%%%%%%%%%%%%%%%%%%%%%%%%%%%%%%%%%%%%%%%%%%%%%%%%%%%%%%%%%%%%%
\begin{definition}[\cite{min_rep}]
\label{d:Module^{pm}_{m}}
Given $m \in \N$, we define $(\g,K)$-modules $\Module{\pm}{m}$ by
\begin{align}
 \Module{+}{m} 
   &:= \left\{ f \in \ourE; H f = m f, X^{+} f = 0, (X^{-})^{m+1} f=0 \right\},
\notag
     \\[3pt]
 \Module{-}{m} 
   &:= \left\{ f \in \ourE; H f = -m f, X^{-} f = 0, (X^{+})^{m+1} f=0 \right\}.
\notag
\end{align}
Note that, by definition, each $f \in \Module{+}{m}$ (resp. $f \in \Module{-}{m}$ ) is a highest weight vector (resp. lowest weight vector) with respect to the $\g'=\mathfraksl_{2}$ action.
It is easy to see that $\Module{+}{m}$ is isomorphic to $\Module{-}{m}$ for any $m \in \N$, though $\Module{+}{0}=\Module{-}{0}$.
\end{definition}

In order to describe elements of $\Module{\pm}{m}$ explicitly, let us introduce a convergent power series $\Psi_{\alpha}$ on $\C$ for $\alpha \in \C \setminus (- \N )$ by
\[
	\Psi_{\alpha} (u) = \sum_{j=0}^{\infty} \frac{(-1)^{j}}{j!\,(\alpha)_{j}} u^{j}	\quad (u \in \C)
\]
with $(\alpha)_{j} = \Gamma(\alpha+j)/\Gamma(\alpha) = \alpha (\alpha+1) \cdots (\alpha+j-1)$.
Note that it is related to the Bessel function $J_{\nu}$ of the first kind. %defined by
%\begin{equation}
%	J_{\nu}(t) = \sum_{j=0}^{\infty} \frac{(-1)^{j}}{j!\, \Gamma(j+\nu+1)} \left( \frac{t}2 \right)^{\nu+2j}.
%\end{equation}
In fact, one has
\begin{equation}
\label{e:normalized_Bessel}
	\Psi_\alpha (u) = \Gamma(\alpha) u^{-(\alpha-1)/2} J_{\alpha-1}(2 u^{1/2}).
\end{equation}
Now we set 
\begin{equation}
\label{e:radial_part_psi}
	\psi_{\alpha}:=\Psi_{\alpha}(r_{x}^{2} r_{y}^{2}/4)
\end{equation}
for brevity.
We will find it convenient to write $\rho_{x}:=r_{x}^{2}/2,\,\rho_{y}:=r_{y}^{2}/2$.
% and
%\begin{equation}
%\label{e:psi_alpha}
% \psi_\alpha^{(j)} := \Psi_\alpha^{(j)} ( {r_x^2 r_y^2}/{4} )  
%     \qquad (j=1,2,\dots)
%\end{equation}
%for brevity, where $\Psi_\alpha^{(j)}(u)$ denotes the $j$-th derivative of $\Psi_\alpha(u)$ in $u$.
%
Then a typical element $f \in \Module{+}{m}$ is given by
\begin{equation}
\label{e:typical1}
	  f = h_1(x) h_2(y) \rho_y^{\mu_{-}} \psi_{\kappa_+}
\end{equation}
with $h_1 \in \Harm^k(\R^p),\; h_2 \in \Harm^l(\R^q)$ and $\mu_{-}=\frac12(m + \kappa_{+} - \kappa_{-}) \in \N$,
while a typical one $f \in \Module{-}{m}$ is given by
\begin{equation}
\label{e:typical2}
	  f = h_1(x) h_2(y) \rho_x^{\mu_{+}} \psi_{\kappa_-}
\end{equation}
with $h_1 \in \Harm^k(\R^p)\; h_2 \in \Harm^l(\R^q)$ and $\mu_{+}=\frac12(m - \kappa_{+} + \kappa_{-}) \in \N$ (see \cite[Proposition 3.2]{min_rep}).
A general element of $\Module{+}{m}$ (resp. $\Module{-}{m}$) is a linear combination of these elements given in \eqref{e:typical1} (resp. \eqref{e:typical2}).

\begin{remark}
\label{r:radial_part}
All $K$-homogeneous elements of $\Module{+}{m}$ with $K$-type $(\kappa_{+},\kappa_{-})$ have the same radial part $\rho_{y}^{\mu_{-}} \psi_{\kappa_{+}} \in \C \llbracket r_{x}^{2},r_{y}^{2} \rrbracket$ with $\mu_{-}=\frac12 (m+\kappa_{+}-\kappa_{-})$.
Similarly, all $K$-homogeneous elements of $\Module{-}{m}$ with $K$-type $(\kappa_{+},\kappa_{-})$ have the same radial part $\rho_{x}^{\mu_{+}} \psi_{\kappa_{-}}$ with $\mu_{+}=\frac12 (m-\kappa_{+}+\kappa_{-})$.
\end{remark}

Now we collect a few facts from \cite{min_rep} that we need in order to prove our main result.

First, we recall the action of $\mathfrak p$ on $\Module{\pm}{m}$ (\cite[Lemma 4.1]{min_rep}).
%\begin{lemma}
Let $X_{i,p+j}$, $i \in [p],\, j \in [q]$, be a basis of $\mathfrak p$ given in \eqref{e:basis4g}.
Then, for $f=h_1 h_2 \rho_y^{\mu_{-}} \psi_{\kappa_+} \in \Module{+}{m}$ with $\kappa(f)=(\kappa_{+},\kappa_{-})$ and $\mu_{-}=\frac12(m+\kappa_{+}-\kappa_{-})$, one sees that
%\begin{enumerate}
%  \item
\begin{subequations}
\label{e:p-action_on_HWV}
\begin{equation}
%\label{e:p-action_on_HWV}
\allowdisplaybreaks{
\begin{aligned}
  - \ai \pi (X_{i,p+j}) f
    = & \, \frac{\kappa_{-} + \mu_{-} - 1}{\kappa_{-}-1} (\pd_{x_i} h_1) (\pd_{y_j} h_2) \rho_y^{\mu_{-}} \psi_{\kappa_{+}-1}     \\
      & +  \mu_{-} (\pd_{x_i} h_1) (y_j h_2)^\dagger \rho_y^{\mu_{-}-1} \psi_{\kappa_{+}-1}     \\
      & +  \frac{\kappa_{+}-\kappa_{-}-\mu_{-}}{\kappa_{+}(\kappa_{-}-1)} (x_i h_1)^\dag (\pd_{y_j} h_2) \rho_y^{\mu_{-}+1} \psi_{\kappa_{+}+1}   \\
      & +  \frac{\kappa_{+}-\mu_{-}-1}{\kappa_{+}} (x_i h_1)^\dagger (y_j h_2)^\dagger \rho_y^{\mu_{-}} \psi_{\kappa_{+}+1},
\end{aligned}
}
\end{equation}
%
%  \item
and, for $f=h_1 h_2 \rho_x^{\mu_{+}} \psi_{\kappa_-} \in \Module{-}{m}$ with $\kappa(f)=(\kappa_{+},\kappa_{-})$ and $\mu_{+}=\frac12(m-\kappa_{+}+\kappa_{-})$, 
\begin{equation}
%\label{e:p-action_on_LWV}
\allowdisplaybreaks{
\begin{aligned}
   - \ai \pi (X_{i,p+j}) f
     = & \, \frac{\kappa_{+} + \mu_{+} - 1}{\kappa_{+}-1} (\pd_{x_i} h_1) (\pd_{y_j} h_2) \rho_x^{\mu_{+}} \psi_{\kappa_{-}-1}   \\
       & + \frac{\kappa_{-}-\kappa_{+}-\mu_{+}}{\kappa_{-}(\kappa_{+}-1)} (\pd_{x_i} h_1) (y_j h_2)^\dagger \rho_x^{\mu_{+}+1} \psi_{\kappa_{-}+1}   \\
       & + \mu_{+} (x_i h_1)^\dag (\pd_{y_j} h_2) \rho_x^{\mu_{+}-1} \psi_{\kappa_{-}-1}    \\
       & + \frac{\kappa_{-}-\mu_{+}-1}{\kappa_{-}} (x_i h_1)^\dagger (y_j h_2)^\dagger \rho_x^{\mu_{+}} \psi_{\kappa_{-}+1}.
\end{aligned}
}
\end{equation}
\end{subequations}
%\end{enumerate}
%\end{lemma}
Here, in general, for a homogeneous polynomial $P=P(x_1,\dots,x_{\sfn})$ on $\R^{\sfn}$ of degree $d$, we set
\begin{equation}
\label{e:daggered_polynom}
  P^{\dag} := P - \frac{\rho}{2d + \sfn - 4} \Delta P,
%  P^{\dag} := P - \frac{r^2}{4 (d + \sfn/2 - 2)} \Delta P,
\notag
\end{equation}
with $\Delta=\sum_{i=1}^{\sfn} \pd_{x_i}^2$ and $\rho=(1/2)\sum_{i=1}^{\sfn} x_i^2$.
Note that if $\Delta^2 P=0$ then $P^\dag$ is harmonic and that if $h=h(x_1,\dots, x_{\sfn})$ is harmonic then $\Delta(x_i h)=2 \pd_{x_i} h$ and $\Delta^2(x_i h)=0$.

Next, the following facts describe the $K$-types of $\Module{\pm}{m}$ and when they are irreducibile (\cite[Theorem 4.6]{min_rep}). 
\begin{facts}
\label{t:K-type formula}
Assume that $p \geqsl 1$, $q \geqsl 1$ and $p+q \in 2 \N$.
Let $m \in \N$ be a non-negative integer satisfying $m+3 \leqsl (p+q)/2$.
Then one has the following.
\begin{enumerate}
    \item
\label{thm_item:K-type}

     The $K$-type formula of $\Module{\pm}{m}$ is given by 
     \begin{equation}
	 \label{e:K-type_Module_m}
     \left. \Module{\pm}{m} \right|_{K} 
       \simeq \bigoplus_{\substack{k, l \geqsl 0 \\  k+\frac{p}2 - (l + \frac{q}2)  \in \Lambda_m}} \Harm^k(\R^p) \otimes \Harm^l(\R^q),
     \end{equation}  
     where $\Lambda_m=\{-m, -m+2, -m+4, \dots, m-2, m\}$.%, the set of $H$-weights of $F_m$;

    \item
\label{thm_item:irreducibility}
     Suppose further that $p, q \geqsl 2$.
     Then $\Module{\pm}{m}$ are irreducible $(\g,K)$-modules.
\end{enumerate}
\end{facts}

Note in particular that if the $K$-type of $f \in \Module{\pm}{m}$ is $\kappa(f)=(\kappa_{+},\kappa_{-})$, 
then it follows from \eqref{e:K-type_Module_m} that
\begin{equation}
\label{e:range_kappa+_kappa-}
	\abs{\kappa_{+} - \kappa_{-}} \leqsl m.
\end{equation}

Henceforth in the rest of the paper, we assume that
\begin{equation}
\label{e:assumpntion_on_pq}
	p \geqsl 2,\; q \geqsl 2,\quad p+q \in 2\N \quad \text{and}\quad  m+3 \leqsl \frac{p+q}2
\end{equation}
so that $\Module{\pm}{m}$ is irreducible.

%%%%%%%%%%%%%%%%%%%%%%%%%%%%%%%%%%%%%%%%%%%%%%%%%%%%%%%%%%%%%%%%%%%%%%%%%%%%%%%%%%%%%%%%%%%%%%%%%%%%%%%%%%%%%%%%%%%%%%%%%%%%%%%%%%%%%%%%%%%%%%%%%%%%%%
%
%	section 2
%
%%%%%%%%%%%%%%%%%%%%%%%%%%%%%%%%%%%%%%%%%%%%%%%%%%%%%%%%%%%%%%%%%%%%%%%%%%%%%%%%%%%%%%%%%%%%%%%%%%%%%%%%%%%%%%%%%%%%%%%%%%%%%%%%%%%%%%%%%%%%%%%%%%%%%%
\section{Annihilator of $\Module{\pm}{m}$}
\label{s:sec2}

In order to translate objects in terms of $\gnat$ into ones in terms of $\g$, and vice versa, let us fix an isomorphism $\Phi$ between $\g$ and $\gnat$ given by
\begin{equation}
	\Phi : \g \xrightarrow{\sim} \gnat, \quad X \mapsto I_{p,q}^{1/2}\, X\, I_{p,q}^{\,-1/2},
\end{equation}
where $I_{p,q}^{1/2}=\diag(1,\dots,1,\ai,\dots,\ai)$ and $I_{p,q}^{-1/2}$ is its inverse.

Let $\{M_{i,j}\}$ be generators of $\gnat$ given by
\begin{align}
\label{e:basis4gnat}
  M_{i,j} &= E_{i,j} - E_{j,i}
\end{align}
for $i,j=1,2,\dots,n$.
Then, one has
\begin{equation}
\label{e:Phi}
	\Phi(X_{i,j}) = \begin{cases}
						M_{i,j} 		&\text{if}\; i,j \in [p], \\
						-M_{i,j}  		&\text{if}\; i,j \in p+[q], \\
						\ai M_{i,j} 	&\text{otherwise}.%{if}\; i \in [p],j \in p+[q], \\
%						\ai M_{i,j}		&\text{if}\; i \in p+[q],j \in [p],\\
					\end{cases}
\end{equation}
We extend $\Phi$ to the isomorphism from $\bigoplus_{j=0}^{\infty} T^{j}(\g)$ onto $\bigoplus_{j=0}^{\infty} T^{j}(\gnat)$, where $T^{j}(\g):=\g \otimes \cdots \otimes \g$ ($j$ factors) etc..

Let $U(\g)$ denote the universal enveloping algebra of $\g$, and $U_j(\g)$ its subspace spanned by products of at most $j$ elements of $\g$, with $U_{-1}(\g)=0$.
It is well known that the associated graded algebra $\gr U(\g) = \bigoplus_{j=0}^{\infty} U_{j}(\g)/U_{j-1}(\g)$ is isomorphic to the symmetric algebra $S(\g)=\bigoplus_{j=0}^{\infty} S^{j}(\g)$.
Let $\sigma: \gr U(\g) \to S(\g)$ be the algebra isomorphism, and $\sigma_{j}$ its $j$-th piece:
\begin{equation}
	\sigma_{j}: U_{j}(\g)/U_{j-1}(\g) \xrightarrow{\sim} S^{j}(\g).  %, \quad u \mapsto 
\end{equation}
Let $\gamma: S(\g) \to U(\g)$ be the linear isomorphism called \emph{symmetrization}, and $\gamma_{j}$ its $j$-th piece
\begin{equation}
\label{e:gamma_j}
	\gamma_{j} : S^{j}(\g) \to U(\g),
\end{equation}
which is an injective linear map onto a vector-space complement to $U_{j-1}(\g)$ in $U_{j}(\g)$.
Then the following diagram is commutative:
\begin{center}
\begin{tikzcd}
								& U_{j}(\g) \ar[d,"p_j"] \\
	S^j(\g)	\ar[ur,"\gamma_j"',"1:1"]	& U_{j}(\g)/U_{j-1}(\g) \ar[l,"\sim"',"\sigma_j"],
\end{tikzcd}
\end{center}
where $p_j$ is the canonical projection.
In particular, $(\sigma_j \circ p_j \circ \gamma_j)(v) = v$ for all $v \in S^j(\g)$. %{1}_{S^j(\g)}$.

In what follows, given a finite-dimensinal vector space $U$ over $\C$,
we will identify $S^j (U)$ (resp. $\mwedge{j} U$) with the subspace of symmetric (resp. alternating) tensors in $U^{\otimes j}$.

For $u \in U(\g)$, we sometimes denote $\pi(u)$ by $\hat u$ for brevity.
Let $\Omega_{\g},\, \Omega_{\mathfrak o_p}$ and $\Omega_{\mathfrak o_q}$ be the Casimir elements of $\g, \, \mathfrak o_p$ and $\mathfrak o_q$ respectively%
	\footnote{We regard $\mathfrak o_{p}$ and $\mathfrak o_{q}$ as subalgebras of $\mathfrak k \simeq \mathfrak o_{p} \oplus \mathfrak o_{q}$ canonically.}%
, which are given by %, for example,
\begin{equation}
\label{e:casimir_elements}
	\Omega_{\g} = \sum_{\substack{i,j \in [n] \\ i<j}} X_{i,j} X_{i,j}^{\vee},
	\quad
	\Omega_{\mathfrak o_p} = \sum_{\substack{i,j \in [p] \\ i<j}} X_{i,j} X_{j,i},
	\quad
	\Omega_{\mathfrak o_q} = \sum_{\substack{i,j \in [q] \\ i<j}} X_{i,j} X_{j,i}.
\end{equation}
Then the corresponding operators represented in $\PD(V)$ are given by
\begin{subequations}
\label{e:Casimirs}
\begin{align}
	\hat \Omega_{\g}	&= (\Eu_x-\Eu_y)^2 + (p-q) (\Eu_x-\Eu_y)  - 2 (\Eu_x + \Eu_y)  
		\notag \\ 
			& \hspace{2em} - \left( r_x^2 \, r_y^2 + r_x^2 \, \Delta_x + r_y^2 \, \Delta_y + \Delta_x \, \Delta_y \right) - p q, 
		\label{e:Casimir_g} \\
	\hat \Omega_{\mathfrak o_{p}}	&= \Eu_x^2 + (p-2) \Eu_x -r_x^2 \, \Delta_x,  
		\label{e:Casimir_op}	\\
	\hat \Omega_{\mathfrak o_{q}}	&=\Eu_y^2 + (q-2) \Eu_y - r_y^2 \, \Delta_y.
		\label{e:Casimir_oq}
\end{align}
\end{subequations}
It is well known that the Casimir operator $\hat \Omega _{\g'}$ of $\g'=\mathfrak{sl}_{2}$ given by 
\[
	\hat \Omega _{\g'} = H^{2} + 2(X^{+} X^{-} + X^{+} X^{-})
\]
satisfies the relation 
\begin{equation*}
\label{e:relation_2casimirs}
  \hat \Omega_{\g} = \hat \Omega_{\g'} - \frac14 (p+q)^2 + (p+q)
\end{equation*}
(see \cite{Howe79, RS80}),
which implies that
\begin{equation}
\label{e:eigenvalue_casimir}
	\hat \Omega_{\g} |_{\Module{\pm}{m}} = m(m+2) - \frac14 (p+q)^{2}+ (p+q)
\end{equation}
since $\hat \Omega_{\g'}$ acts on ${\Module{\pm}{m}}$ by the scalar $m(m+2)$.

Let $\alpha$ denote the highest root of $\g$ relative to any positive root system.
Then $S^{2}(\g)$ decomposes as $S^{2}(\g) \simeq V_{2 \alpha} \oplus W$, 
where $V_{2 \alpha}$ is the irreducible $\g$-module with highest weight $2 \alpha$ and $W$ is the $\g$-invariant complement of $V_{2 \alpha}$ (see \eqref{e:irrep_decomp_S2(o_n)} below).

%%%%%%%%%%%%%%%%%%%%%%%%%%%%%%%%%%%%%%%%%%%%%%%%%%%%
%
%	Main Tool due to Garfinkle
%
%%%%%%%%%%%%%%%%%%%%%%%%%%%%%%%%%%%%%%%%%%%%%%%%%%%%
\begin{theorem}[Garfinkle]
\label{t:garfinkle}
Let $\mathscr I$ be an ideal of infinite codimension in $U(\g)$ and set $I=\bigoplus_{j=0}^{\infty} (\mathscr I \cap U_{j}(\g)) / (\mathscr I \cap U_{j-1}(\g)) \subset \gr U(\g)$.
Then $\mathscr I$ is the Joseph ideal if and only if $\sigma(I) \cap S^2(\g)=W$.
\end{theorem}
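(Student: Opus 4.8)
\smallskip
\noindent\emph{Proof proposal.}\ \ The plan is to deduce the criterion from Joseph's construction of the ideal in \cite{Joseph76} (see also \cite{GS04}) together with the classical invariant-theoretic description of the minimal nilpotent orbit; I would not attempt an argument independent of that work. Write $\mathscr J$ for the Joseph ideal. The external facts I would take for granted are: (i) for $\g$ simple and not of type $A$, $\mathscr J$ is generated by a subspace of $U_{2}(\g)$ of the shape $\{\gamma(w)+\lambda(w):w\in W\}$ for a suitable $\lambda\in\operatorname{Hom}_{\g}(W,\C)$, this value of $\lambda$ is the \emph{only} one for which the ideal so generated is proper, and $U(\g)/\mathscr J$ is a domain; (ii) under the identification $S(\g)\cong\C[\g^{*}]$ afforded by $B$, the homogeneous ideal of the closed cone $\overline{\mathcal O_{\min}}\subset\g^{*}$ is prime and generated in degree $2$ by $W$, so that $\operatorname{Dim}U(\g)/\mathscr J=\dim\mathcal O_{\min}$ is the least Gelfand--Kirillov dimension of $U(\g)$ modulo a proper two-sided ideal.

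I would first record the module bookkeeping. Since $\g=\mathfrak o_{n}$ is not of type $A$, the adjoint module does not occur in $S^{2}(\g)$; the trivial module occurs in $S^{2}(\g)$ exactly once, spanned by the Casimir symbol $w_{0}=\sum_{i<j}X_{i,j}\otimes X_{i,j}^{\vee}$, and that copy lies in $W$; and $S^{2}(\g)/W\cong V_{2\alpha}$ is irreducible, with $V_{2\alpha}$ of multiplicity one in $S^{2}(\g)$. All three are read off from the decomposition of $S^{2}(\mathfrak o_{n})$ recalled in the Appendix. Since $\gamma$ is $\g$-equivariant, $U_{2}(\g)\cong\C\oplus\g\oplus V_{2\alpha}\oplus W$ as a $\g$-module, $\gamma_{2}$ identifying the last two summands with $V_{2\alpha}$ and $W$.

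For the ``only if'' direction, suppose $\mathscr I=\mathscr J$. By (i) the symbols of the generators of $\mathscr J$ span $W$, hence $\sigma(I)\cap S^{2}(\g)\supseteq W$; it cannot be all of $S^{2}(\g)$, for then the associated variety of $\mathscr J$ would be $\{0\}$ and $\mathscr J$ would have finite codimension; and since $S^{2}(\g)/W$ is irreducible nothing lies strictly between, so $\sigma(I)\cap S^{2}(\g)=W$. For the ``if'' direction, suppose $\mathscr I$ has infinite codimension and $\sigma(I)\cap S^{2}(\g)=W$. Then $\mathscr I$ meets $U_{1}(\g)$ trivially (a nonzero $\operatorname{ad}\g$-submodule of $\C\oplus\g$ is $\C$, $\g$, or $\C\oplus\g$, each of which forces $\mathscr I$ to have finite codimension), so the symbol map identifies $\mathscr I\cap U_{2}(\g)$ with $W$; thus $\mathscr I\cap U_{2}(\g)$ is the graph of a $\g$-map $W\to\C\oplus\g\oplus V_{2\alpha}$, which by the bookkeeping above lands in the one-dimensional space $\operatorname{Hom}_{\g}(W,\C)$. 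Hence $\mathscr I$ contains $\gamma(W')$ together with $\Omega_{\g}+t$ for some scalar $t$, where $W=W'\oplus\C w_{0}$ and $\Omega_{\g}=\gamma(w_{0})$ up to an additive constant. By (i) the ideal generated by these elements is proper for exactly one value of $t$, for which it equals $\mathscr J$; as $\mathscr I$ is proper, $\mathscr I\supseteq\mathscr J$. Finally, if $\mathscr I\supsetneq\mathscr J$ then $\gr\mathscr I\supsetneq\gr\mathscr J$, so by (ii) the image of $\gr\mathscr I$ in the domain $\C[\overline{\mathcal O_{\min}}]$ is a nonzero ideal; quotienting by it strictly lowers the Krull dimension, giving $\operatorname{Dim}U(\g)/\mathscr I<\dim\mathcal O_{\min}$, contradicting (ii). Hence $\mathscr I=\mathscr J$.

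The step I expect to be the genuine obstacle is entirely inside input (i): that among all scalar corrections of $\gamma(W)$ exactly one generates a proper ideal, and that the outcome is completely prime. This is Joseph's hard theorem, and it is the mechanism behind the exclusion of type $A$; in a self-contained treatment one would have to reproduce that analysis, in particular pinning down the distinguished value of $t$ from the complete-primeness / infinitesimal-character condition, whereas here I would simply cite \cite{Joseph76,GS04}. A subsidiary point to handle carefully is the bookkeeping of additive constants relating $\gamma(w_{0})$, $\Omega_{\g}$, and the normalisation of $B$, so that the scalar $t$ produced by the graph argument coincides with Joseph's distinguished value.
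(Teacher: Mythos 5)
The paper does not actually prove this statement: it is quoted as Garfinkle's theorem, with the proof deferred to \cite{Garfinkle82,GS04}, so there is no internal argument to compare yours against. Judged on its own, your derivation is correct and is essentially the Gan--Savin route: the two genuinely hard inputs you isolate (uniqueness of the scalar correction $\lambda$ for which the quadratic relations generate a proper ideal, and the degree-two generation of the prime ideal of $\overline{\mathcal O_{\mathrm{min}}}$ by $W$) are precisely what those references establish, and your reduction of the criterion to them --- the module bookkeeping $U_2(\g)\cong\C\oplus\g\oplus V_{2\alpha}\oplus W$, the graph argument resting on $\operatorname{Hom}_{\g}(W,\g)=0$ and $\dim\operatorname{Hom}_{\g}(W,\C)=1$ for $\g$ not of type $A$, and the final Gelfand--Kirillov dimension comparison --- is the intended deduction.

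Two points should be tightened. First, your input (ii) is false as literally stated: an ideal of finite nonzero codimension is proper and gives $\operatorname{Dim}U(\g)/\mathscr I=0<\dim\mathcal O_{\mathrm{min}}$. The correct statement restricts to two-sided ideals of \emph{infinite} codimension, and the clean way to obtain the bound is to note that the associated variety $V(\gr\mathscr I)$ is a closed $G$-stable cone in $\g^{*}$, and that any such cone of dimension strictly less than $\dim\mathcal O_{\mathrm{min}}$ is $\{0\}$ (the minimal nilpotent orbit has the smallest dimension among all nonzero adjoint orbits), which would force finite codimension. Since your $\mathscr I$ is assumed of infinite codimension, the contradiction still closes, but the argument should be phrased this way. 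Second, the last step implicitly uses the equality $\gr\mathscr J=I\bigl(\overline{\mathcal O_{\mathrm{min}}}\bigr)$, not merely an inclusion; this deserves the one-line justification $(W)\subseteq\gr\mathscr J\subseteq\sqrt{\gr\mathscr J}=I\bigl(V(\gr\mathscr J)\bigr)=I\bigl(\overline{\mathcal O_{\mathrm{min}}}\bigr)=(W)$, which uses both that $(W)$ is prime and that the associated variety of $\mathscr J$ is $\overline{\mathcal O_{\mathrm{min}}}$. With these repairs the proposal is a faithful reconstruction of the argument the paper outsources to \cite{GS04}.
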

In fact, it is shown that Theorem \ref{t:garfinkle} holds true for all complex simple Lie algebras not of type $A$ in \cite{Garfinkle82,GS04}.

Using the notation in Appendix \ref{s:app_irrep_S2} below, the $\g$-invariant complement $W$ is given by %satisfies that
\begin{equation}
\label{e:W}
	W = {E}^{\g}_{\varnothing} \oplus {E}^{\g}_{\ydiagram{1,1,1,1}} \oplus {E}^{\g}_{\ydiagram{2}},
\end{equation} 
where 
\begin{equation}
%	E^{\g}_{D} = \Phi^{-1}(\nat{E}_{D}) \qquad (D \in \{\varnothing \,, \ydiagram{1,1,1,1} \,, \ydiagram{2}\}).
	E^{\g}_{\lambda} = \Phi^{-1}(\nat{E}_{\lambda}) \qquad (\lambda \in \{\varnothing \,, \ydiagram{1,1,1,1} \,, \ydiagram{2}\}).
\end{equation}

%\begin{equation}
%\label{e:W}
%	\Phi(W) = \nat{E}_{\varnothing} \oplus \nat{E}_{\ydiagram{1,1,1,1}} \oplus \nat{E}_{\ydiagram{2}},
%\end{equation} 
%where
%%\begin{equation}
%\begin{align}
%	\nat{E}_{\varnothing} &=\< \nat{Q} \>_{\C},
%\\
%	\nat{E}_{\ydiagram{1,1,1,1}} & =\< \nat{S}_{ijkl} ; 1 \leqsl i<j<k<l \leqsl n \>_{\C}, 
%\\
%	\nat{E}_{\ydiagram{2}} &= \< \nat{S}_{ij}; i,j \in [n] \>_{\C},
%\end{align}
%%\end{equation}
%and
%\begin{align}
%	\nat{Q} 
%			&=\sum_{i,k \in [n]}  M_{i,k} \otimes M_{k,i} = - \sum_{i,k \in [n]}  M_{i,k} \otimes M_{i,k},
%\\
%	\nat{S}_{ijkl} 
%			&= \frac12 \big( M_{i,j} \otimes M_{k,l} - M_{i,k} \otimes M_{j,l} + M_{i,l} \otimes M_{j,k} \notag \\ 
%			&\hspace{2em} + M_{k,l} \otimes M_{i,j} - M_{j,l} \otimes M_{i,k} + M_{j,k} \otimes M_{i,l} \big),
%\\
%	\nat{S}_{ij} 
%			&=\frac12 \sum_{k \in [n]} ( M_{i,k} \otimes M_{k,j} + M_{k,j} \otimes M_{i,k} ) - \frac1{n} \delta_{i,j} \nat{Q}.
%\end{align}

The following lemma is trivial, and its proof is left to the reader.
\begin{lemma}
\label{l:natQ_and_Q}
Define an element $Q \in T^{2}(\g)$ by $Q=\sum_{i,j} X_{i,j} \otimes X_{i,j}^{\vee}$. %, where we set $X_{i,j}^{\vee}:=\epsilon_{i} X_{j,\,i}^{\vee}$ for $i>j$. \textcolor{red}{(???)}
Then $Q$ is a symmetric tensor and satisfies $Q=\Phi^{-1}(\nat{Q})$ and $\gamma_{2}(Q)=2 \Omega_{\g}$.
\end{lemma}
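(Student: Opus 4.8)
The plan is to verify the three assertions one at a time; each reduces to a short computation with the explicit data in \eqref{e:dual_elements} and \eqref{e:Phi} together with the definition \eqref{e:casimir_elements} of $\Omega_{\g}$. The observation used throughout is that by \eqref{e:dual_elements} every dual generator $X_{i,j}^{\vee}$ equals $\pm X_{i,j}$, so $X_{i,j}$ and $X_{i,j}^{\vee}$ are proportional; hence they commute in $U(\g)$, and the tensor $X_{i,j}\otimes X_{i,j}^{\vee}$, being a scalar multiple of $X_{i,j}\otimes X_{i,j}$, is symmetric. Thus $Q$, a sum of symmetric tensors, lies in $S^{2}(\g)$. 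Moreover $X_{i,i}=0$ and $X_{j,i}\otimes X_{j,i}^{\vee}=(-X_{i,j})\otimes(-X_{i,j}^{\vee})=X_{i,j}\otimes X_{i,j}^{\vee}$, so the sum over all pairs collapses: $Q=2\sum_{i<j}X_{i,j}\otimes X_{i,j}^{\vee}$, and likewise $\nat{Q}=2\sum_{i<j}M_{i,j}\otimes M_{i,j}^{\vee}$.

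For $\gamma_{2}(Q)=2\Omega_{\g}$, I would recall that under the identification of $S^{2}(\g)$ with the symmetric tensors in $T^{2}(\g)$ the symmetrization map $\gamma_{2}$ sends $x\otimes y+y\otimes x$ to $xy+yx$ in $U(\g)$. Applying this to $Q=\sum_{i<j}\bigl(X_{i,j}\otimes X_{i,j}^{\vee}+X_{i,j}^{\vee}\otimes X_{i,j}\bigr)$ and using $X_{i,j}X_{i,j}^{\vee}=X_{i,j}^{\vee}X_{i,j}$ gives $\gamma_{2}(Q)=\sum_{i<j}\bigl(X_{i,j}X_{i,j}^{\vee}+X_{i,j}^{\vee}X_{i,j}\bigr)=2\sum_{i<j}X_{i,j}X_{i,j}^{\vee}=2\Omega_{\g}$ by \eqref{e:casimir_elements}.

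For $Q=\Phi^{-1}(\nat{Q})$ the cleanest route is to note that $\Phi$ is an isometry for $B$: since $\Phi(X)=I_{p,q}^{1/2}\,X\,I_{p,q}^{-1/2}$ is a conjugation, $\trace{\Phi(X)\Phi(Y)}=\trace{XY}$, so $B(\Phi(X),\Phi(Y))=B(X,Y)$. Hence $\Phi$ carries the basis $\{X_{i,j}\}_{i<j}$ of $\g$ together with its $B$-dual basis $\{X_{i,j}^{\vee}\}_{i<j}$ to a basis of $\gnat$ together with its $B$-dual basis; since the tensor $\sum_{a}e_{a}\otimes e^{a}$ attached to a basis $\{e_{a}\}$ of $\gnat$ and its $B$-dual basis $\{e^{a}\}$ does not depend on the chosen basis, $\Phi$ must send $\sum_{i<j}X_{i,j}\otimes X_{i,j}^{\vee}$ to $\sum_{i<j}M_{i,j}\otimes M_{i,j}^{\vee}$, i.e.\ $\Phi(Q)=\nat{Q}$. (Alternatively one checks $\Phi(X_{i,j}\otimes X_{i,j}^{\vee})=M_{i,j}\otimes M_{i,j}^{\vee}$ directly from \eqref{e:Phi} and \eqref{e:dual_elements}, the cases $i,j\in[p]$, $i,j\in p+[q]$ and $i,j$ split being handled separately, the last using $\ai^{2}=-1$.) Everything here is routine bookkeeping; the only points that require attention are the normalization of $\gamma_{2}$ and the factor $2$ relating the sum over all pairs to the sum over $i<j$, so I expect no genuine obstacle — which is precisely why the lemma is labelled trivial.
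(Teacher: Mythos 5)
Your proof is correct. The paper states this lemma without proof (``trivial, and its proof is left to the reader''), and your verification --- symmetry of each term from $X_{i,j}^{\vee}=\pm X_{i,j}$, the normalization $\gamma_{2}(x\otimes x)=x^{2}$ yielding the factor $2\Omega_{\g}$ after collapsing the sum to $i<j$, and either the $B$-isometry/basis-independence argument or the three-case check (the split case using $\ai^{2}=-1$) for $\Phi(Q)=\nat{Q}$ --- is exactly the routine computation the author intends, with the one bookkeeping point (the factor $2$ on both $Q$ and $\nat{Q}$) handled correctly.
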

%
%\begin{proof}
%It is immediate from \eqref{e:dual_elements} that $Q \in S^{2}(\g)$.
%By the definition \eqref{e:Phi} of the isomorphism $\Phi$ and \eqref{e:dual_elements}, one obtains
%\begin{align*}
%	\Phi^{-1}(\nat{Q}) &= \sum_{i,j} \Phi^{-1}(M_{i,j}) \otimes \Phi^{-1}(M_{j,i})
%		\\
%		&= \Big( \sum_{i,j \in [p]} + \sum_{i \in [p], j \in p+[q]} + \sum_{i \in p+[q], j \in [p]} + \sum_{i,j \in p+[q]} \Big) \, \Phi^{-1}(M_{i,j}) \otimes \Phi^{-1}(M_{j,i})
%		\\
%		&= \Big( \sum_{i,j \in [p]} + \sum_{i,j \in p+[q]} \Big) X_{i,j} \otimes X_{j,i} 
%				- \Big( \sum_{i \in [p],j \in p+[q]} + \sum_{i \in p+[q],j \in [p]} \Big) X_{i,j} \otimes X_{j,i}
%		\\
%		&= Q.
%\qedhere
%\end{align*}
%\end{proof}

%Note that $Q$ corresponds to $2 \Omega_{\g}$ under $\gamma_{2}$.

%%%%%%%%%%%%%%%%%%%%%%%%%%%%%%%%%%%%%%%%%%%%%%%%%%%%%%%%%%%%%%%%%%%%%%%%%%%%%%%%%%%
%
%
%	Caution!: \Xi in this manuscript is equal to
%						(1/2) \times Gamma in my Note
%
%
%%%%%%%%%%%%%%%%%%%%%%%%%%%%%%%%%%%%%%%%%%%%%%%%%%%%%%%%%%%%%%%%%%%%%%%%%%%%%%%%%%%
Following \cite{BZ91}, let us introduce an element $\nat{\Xi} \in S^{2}(\gnat)$, or $\Xi \in S^{2}(\g)$, that plays a critical role in the proof of our main result:
\begin{equation}
	\nat{\Xi} := \frac12 \sum_{i,j} \eta^{ij} \nat{S}_{ij} = \frac12 \Big( \sum_{i \in [p]} \nat{S}_{ii} - \sum_{i \in p+[q]} \nat{S}_{ii} \Big),
\end{equation} 
where $\eta^{ij}$ denotes the $(i,j)$-th entry of $I_{p,q}^{-1}=I_{p,q}$.

\begin{lemma}
\label{l:Xi_and_Omegas}
Set $\Xi:=\Phi^{-1}(\nat{\Xi}) \in S^{2}(\g)$.
Then it satisfies that
\begin{equation}
	\gamma_{2} (\Xi) = \Omega_{\mathfrak o_{p}} - \Omega_{\mathfrak o_{q}} - \frac{p-q}{p+q} \Omega_{\g}.
\end{equation}
\end{lemma}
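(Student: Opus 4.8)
The plan is to compute $\gamma_2(\Xi)$ explicitly as an element of $U(\g)$ and match it against $\Omega_{\mathfrak o_p}$, $\Omega_{\mathfrak o_q}$ and $\Omega_{\g}$. The starting point is the definition $\nat{\Xi} = \tfrac12\sum_{i,j}\eta^{ij}\nat S_{ij}$, where $\nat S_{ij}$ is the quadratic element of $S^2(\gnat)$ introduced in \cite{BZ91} (built from the matrix units $M_{k,l}$ — the $(i,j)$-entry of the "square" of the defining matrix of generators); I would first unwind this definition so that $\nat\Xi$ is written as an explicit symmetric sum $\sum_{k,l} c_{kl}\, M_{\cdot,\cdot}\otimes M_{\cdot,\cdot}$. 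Applying $\Phi^{-1}$ via \eqref{e:Phi} converts each $M_{i,j}$ into the appropriate $X_{i,j}$ (with a factor $\pm1$ or $\pm\ai$ depending on which block the indices lie in), yielding $\Xi$ as an explicit element of $S^2(\g)$ in the basis $\{X_{i,j}\}$.

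Next I would apply the symmetrization map $\gamma_2$. Since $\gamma_2(X\otimes Y) = \tfrac12(XY+YX)$ in $U(\g)$, passing from the symmetric tensor to $U(\g)$ introduces correction terms coming from the commutators $[X_{i,j},X_{k,l}]$, i.e. from the structure constants of $\g$. The bookkeeping here is exactly the same kind as in Lemma \ref{l:natQ_and_Q}, where $\gamma_2(Q)=2\Omega_{\g}$. I would organize the computation by separating the index sums into the three blocks: both indices in $[p]$, both in $p+[q]$, and mixed. The first two blocks should reassemble, after symmetrization, into $\Omega_{\mathfrak o_p}$ and $-\Omega_{\mathfrak o_q}$ respectively (up to lower-order terms), using the formulas \eqref{e:casimir_elements} and the sign rule \eqref{e:dual_elements} for $X_{i,j}^{\vee}$; the mixed block, together with the leftover constant/lower-order terms from all three, should be expressible as a multiple of $\Omega_{\g}$ via $Q$ from Lemma \ref{l:natQ_and_Q}. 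Matching the overall normalization is where the coefficient $-(p-q)/(p+q)$ must drop out; I would pin this down by comparing traces, or equivalently by evaluating both sides on a convenient representation (e.g. the defining representation of $\gnat$, where $\nat S_{ij}$, the Casimirs of $\mathfrak o_p,\mathfrak o_q$, and $\Omega_{\gnat}$ all act by known scalars), which fixes the constant cleanly without grinding all the commutator terms by hand.

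The main obstacle I expect is the commutator bookkeeping in the mixed block: the terms $X_{i,p+j}\otimes X_{k,p+l}$ do not commute in general, and symmetrizing them produces a tangle of lower-degree terms that must be shown to cancel against the lower-degree debris left over from the pure-$\mathfrak o_p$ and pure-$\mathfrak o_q$ sums and from the constant shift. A clean way to sidestep most of this is to note that both sides of the claimed identity lie in $U_2(\g)$, that their images in $S^2(\g)=\gr_2 U(\g)$ agree by construction (this is essentially the statement that $\nat\Xi$ equals the symbol of the right-hand side, which is a computation purely in $S^2(\g)$ using $\sigma_2$), and that therefore the two sides differ by an element of $U_1(\g)=\g\oplus\C$. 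That difference is $\g$-invariant (all the ingredients are), hence lies in $\C$, and finally the constant is forced to be zero by evaluating on the trivial representation or by a trace argument. This reduces the proof to a degree-$2$ symmetric-tensor identity plus one scalar check, which is the form I would present.
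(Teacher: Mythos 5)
Your primary route---unwinding $\nat{S}_{ij}$, transporting by $\Phi^{-1}$ via \eqref{e:Phi}, and applying $\gamma_2$ blockwise---is exactly the paper's proof, and it works. Two corrections, though. First, the ``commutator bookkeeping'' you brace yourself for never materializes: writing $\nat{S}_{ii}=\sum_k M_{i,k}\otimes M_{k,i}-\tfrac1n\nat{Q}$, the two mixed-block sums $\sum_{i\in[p],k\in p+[q]}$ and $\sum_{i\in p+[q],k\in[p]}$ cancel against each other, and what remains is a combination of tensors $M_{i,k}\otimes M_{k,i}=-M_{i,k}\otimes M_{i,k}$, each already symmetric; $\gamma_2$ sends such a square tensor to $X_{i,k}X_{k,i}$ with no lower-order correction, and the $\nat{Q}$-term contributes $-\tfrac{p-q}{2n}\cdot 2\Omega_{\g}$ by Lemma \ref{l:natQ_and_Q}. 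So the identity holds on the nose after a two-line computation and the coefficient $-(p-q)/(p+q)$ falls out of the $\tfrac1n\delta_{ij}\nat{Q}$ normalization in $\nat{S}_{ij}$---no trace comparison or evaluation on the defining representation is needed. Second, your fallback argument contains a false step: the difference of the two sides is \emph{not} $\g$-invariant, because $\Omega_{\mathfrak o_p}$, $\Omega_{\mathfrak o_q}$ and $\Xi$ itself are only $K$-invariant ($\Xi$ spans a line in the nontrivial module $E^{\g}_{\ydiagram{2}}$). The reduction can be salvaged by using $K$-invariance instead---the $K$-fixed part of $\g$ under the adjoint action is zero (one needs the full orthogonal groups when $p$ or $q$ equals $2$), so the difference still lies in $\C$ and is killed by the counit---but as stated the invariance claim would not survive scrutiny, and in any case the detour is unnecessary here.
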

\begin{proof}
Since $\nat{S}_{ii}=\sum_{k} M_{i,k} \otimes M_{k,i} -\frac1n \nat{Q}$, one sees
\begin{align*}
	\nat{\Xi} &= \frac12 \sum_{k} \Big( \sum_{i \in [p]} - \sum_{i \in p+[q]} \Big) M_{i,k} \otimes M_{k,i} - \frac{p-q}{2n} \nat{Q}
	\\
		&= \frac12 \Big( \sum_{i,k \in [p]} + \sum_{\substack{i \in [p] \\ k \in p+[q]}} \Big) M_{i,k} \otimes M_{k,i} 
				- \frac12 \Big( \sum_{\substack{i \in p+[q] \\ k \in [p]}} + \sum_{i,k \in p+[q]} \Big) M_{i,k} \otimes M_{k,i} - \frac{p-q}{2n} \nat{Q} 
%		&= \frac12 \Big( \sum_{i,k \in [p]} + \sum_{i \in [p], k \in p+[q]} \Big) M_{i,k} \otimes M_{k,i} 
%				- \frac12 \Big( \sum_{i \in p+[q], k \in [p]} + \sum_{i,k \in p+[q]} \Big) M_{i,k} \otimes M_{k,i} - \frac{p-q}{2n} \nat{Q} 
	\\
		&= \frac12 \Big( \sum_{i,k \in [p]} - \sum_{i,k \in p+[q]} \Big) M_{i,k} \otimes M_{k,i} - \frac{p-q}{2n} \nat{Q}.
\end{align*}
Therefore, it follows from \eqref{e:Phi} that
\begin{align}
	\Xi &= \frac12 \Big( \sum_{i,k \in [p]} X_{i,k} \otimes X_{k,i} - \sum_{i,k \in p+[q]} (-X_{i,k}) \otimes (-X_{k,i}) \Big) - \frac{p-q}{2n} \Phi^{-1}(\nat{Q})
		\notag \\
		&= \sum_{\substack{i,k \in [p] \\ i<k}} X_{i,k} \otimes X_{k,i} - \sum_{\substack{i,k \in p+[q] \\ i<k}} X_{i,k} \otimes X_{k,i} - \frac{p-q}{2n} Q. 
\end{align}
Now, the result immediately follows from Lemma \ref{l:natQ_and_Q}.% and \eqref{e:casimir_elements}.
\end{proof}

\begin{proposition}
\label{p:eigenvalues_casimir_k}
For a $K$-homogeneous $f \in \Module{\pm}{m}$ with $\kappa(f)=(\kappa_{+},\kappa_{-})$, one has
\begin{align}
	\hat \Omega_{\mathfrak o_{p}} f &= \Big( (\kappa_{+}-1)^{2} -\frac{(p-2)^{2}}4 \Big) f,
\label{e:eigenvalues_casimir_k1}
		\\
	\hat \Omega_{\mathfrak o_{q}} f &= \Big( (\kappa_{-}-1)^{2} -\frac{(q-2)^{2}}4 \Big) f.	
\label{e:eigenvalues_casimir_k2}
\end{align}
\end{proposition}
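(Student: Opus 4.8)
The plan is to evaluate $\hat\Omega_{\mathfrak o_p}$ and $\hat\Omega_{\mathfrak o_q}$ directly on a typical element, using the explicit differential-operator expressions \eqref{e:Casimir_op}--\eqref{e:Casimir_oq} and the explicit shape \eqref{e:typical1}--\eqref{e:typical2} of the elements of $\Module{\pm}{m}$. It suffices to treat a single product: a $K$-homogeneous element of $K$-type $(\kappa_{+},\kappa_{-})$ is a linear combination of terms $f = h_1(x)\,\phi$ with $h_1 \in \Harm^k(\R^p)$, $k = \kappa_{+} - p/2$, and $\phi$ a factor depending on the $x$-variables only through $r_x^2$ — by \eqref{e:typical1}, \eqref{e:typical2} and \eqref{e:radial_part_psi} the only other $x$-dependent factor is $\psi_{\kappa_{\pm}} = \Psi_{\kappa_{\pm}}(r_x^2 r_y^2/4)$, which is a function of $r_x^2$. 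Symmetrically, the same element is of the form $h_2(y)\,\tilde\phi$ with $h_2 \in \Harm^l(\R^q)$, $l = \kappa_{-} - q/2$, and $\tilde\phi$ depending on the $y$-variables only through $r_y^2$, so that \eqref{e:eigenvalues_casimir_k2} will follow from \eqref{e:eigenvalues_casimir_k1} by interchanging $(x,p,\kappa_{+})$ with $(y,q,\kappa_{-})$.

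The computation is a short application of the Leibniz rule. Writing $u = r_x^2$ and using that $h_1$ is harmonic and homogeneous of degree $k$, one finds $\Eu_x(h_1\phi) = h_1(\Eu_x\phi + k\phi)$, while in $\Delta_x(h_1\phi)$ the second-order cross term contributes, after multiplication by $r_x^2$, exactly $2k\,h_1\,\Eu_x\phi$ (because $\nabla_x\phi$ is proportional to $x$). Substituting into \eqref{e:Casimir_op}, the two copies of $\pm 2k\,h_1\Eu_x\phi$ cancel and one is left with
\[
	\hat\Omega_{\mathfrak o_p}(h_1\phi) = \bigl(k^2 + (p-2)k\bigr)h_1\phi + h_1\bigl(\Eu_x^2 + (p-2)\Eu_x - r_x^2\Delta_x\bigr)\phi.
\]
The key point is then that the operator $\Eu_x^2 + (p-2)\Eu_x - r_x^2\Delta_x$ annihilates any function depending on $x$ only through $u = r_x^2$; this is immediate from $\Eu_x\phi = 2u\phi_u$, $\Eu_x^2\phi = 4u\phi_u + 4u^2\phi_{uu}$ and $r_x^2\Delta_x\phi = 2pu\phi_u + 4u^2\phi_{uu}$. (Conceptually this reflects the fact that $\hat\Omega_{\mathfrak o_p}$ centralizes $\pi(\mathfrak o_p)$, which kills functions radial in $x$, so that $\{h\phi : h \in \Harm^k(\R^p)\}$ is an $\rmO(p)$-submodule of $\ourE$ isomorphic to $\Harm^k(\R^p)$, on which the central element $\hat\Omega_{\mathfrak o_p}$ acts by the Casimir scalar $k(k+p-2)$.) Hence $\hat\Omega_{\mathfrak o_p}f = k(k+p-2)\,f$.

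It then remains only to rewrite the scalar: since $\kappa_{+} = k + p/2$, one has $k(k+p-2) = \bigl(k + \tfrac{p-2}{2}\bigr)^2 - \tfrac{(p-2)^2}{4} = (\kappa_{+} - 1)^2 - \tfrac{(p-2)^2}{4}$, which is \eqref{e:eigenvalues_casimir_k1}, and \eqref{e:eigenvalues_casimir_k2} follows verbatim with $y,q,\kappa_{-}$ in place of $x,p,\kappa_{+}$. I do not anticipate any genuine obstacle; the only step requiring care is the elementary bookkeeping that the non-harmonic radial factors in \eqref{e:typical1}--\eqref{e:typical2} are separately radial in the $x$- and in the $y$-variables, which is clear from \eqref{e:radial_part_psi} and the definitions of $\psi_\alpha$, $\rho_x$, $\rho_y$.
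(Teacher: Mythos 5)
Your proof is correct, and while it rests on the same basic strategy as the paper's --- direct evaluation of the explicit operators \eqref{e:Casimir_op}--\eqref{e:Casimir_oq} on the typical elements \eqref{e:typical1}--\eqref{e:typical2} --- it is organized around a cleaner observation that the paper does not isolate. The paper expands $\Delta_x f$, $\Eu_x f$, $\Eu_x^2 f$ for $f=h_1h_2\rho_y^{\mu_-}\psi_{\kappa_+}$ via the radial-Laplacian formula \eqref{e:laplacian_applied_to_product2}, tracks the $\psi'_{\kappa_+}$ and $\psi''_{\kappa_+}$ terms until they cancel, and then repeats a noticeably messier version of the same computation in the $y$-variables (where the extra factor $\rho_y^{\mu_-}$ intervenes), finally simplifying $(l+2\mu_-)^2+(q-2)(l+2\mu_-)-2\mu_-(2l+q+2\mu_--2)$ down to $l(l+q-2)$. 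You instead prove once and for all that $\Eu_x^2+(p-2)\Eu_x-r_x^2\Delta_x$ annihilates anything radial in $x$ and passes through a degree-$k$ harmonic factor at the cost of the scalar $k(k+p-2)$; this dispenses with all derivatives of $\Psi_{\kappa_\pm}$, makes it transparent that the eigenvalue depends only on the $K$-type (and not on $\mu_\pm$ or on the precise radial profile --- a fact the paper's second computation only reveals after the $\mu_-$-terms cancel), and yields \eqref{e:eigenvalues_casimir_k2} by pure $x\leftrightarrow y$ symmetry rather than by a second computation. Your parenthetical identification of the scalar as the $\mathfrak o_p$-Casimir eigenvalue on $\Harm^k(\R^p)$ is the right conceptual reason and is consistent with the normalization of $\Omega_{\mathfrak o_p}$ in \eqref{e:casimir_elements}. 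The one bookkeeping point to watch --- that for $\Module{-}{m}$ the non-harmonic factor is $\rho_x^{\mu_+}\psi_{\kappa_-}$ rather than $\psi_{\kappa_-}$ alone, so ``the only other $x$-dependent factor is $\psi_{\kappa_\pm}$'' is slightly imprecise --- is harmless, since $\rho_x^{\mu_+}\psi_{\kappa_-}$ is still radial in $x$ (and in $y$), which is all your argument uses. No gap.
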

\begin{proof}
We only show the case of $\Module{+}{m}$ here.
The other case can be proved exactly in the same manner.

First, we show \eqref{e:eigenvalues_casimir_k1}.
It suffices to prove it for a typical element $f=h_{1} h_{2} \rho_{y}^{\mu_{-}} \psi_{\kappa_{+}} \in \Module{+}{m}$ with $h_{1} \in \Harm^{k}(\R^{p}),\,h_{2} \in \Harm^{l}(\R^{q})$ and $\mu_{-}=(1/2)(m+\kappa_{+}-\kappa_{-})$, where $(\kappa_{+},\kappa_{-})=(k+\frac{p}2,l+\frac{q}2)$.
Recall that for a homogeneous polynomial $h=h(x_1,\dots,x_{\sfn})$ on $\R^{\sfn}$ of degree $d$ and for a smooth function $\phi(u)$ in a single variable $u$, we have
\begin{equation}
\label{e:laplacian_applied_to_product2}
	\Delta ( h \phi(\rho) ) = \left( 2 d + n \right) h \phi'(\rho) + 2 h \rho \phi''(\rho),
%	\frac12 \Delta ( h \phi(\rho) ) = \left( d + \frac{n}{2} \right) h \phi'(\rho) + h \rho \phi''(\rho),
\end{equation}
where $\Delta=\sum_{i=1}^{\sfn} \pd_{x_i}^2$ and $\rho=(1/2)\sum_{i=1}^{\sfn} x_i^2$ (cf.~\cite[Lemma 3.2]{min_rep}).
Thus, for $f=h_{1} h_{2} \rho_{y}^{\mu_{-}} \psi_{\kappa_{+}}$, we have
\allowdisplaybreaks{
\begin{align*}
	\Delta_{x} f &= h_{2} \rho_{y}^{\mu_{-}} \Delta_{x}(h_{1} \Psi_{\kappa_{+}}(\rho_{x} \rho_{y})) 
		\\
		&= (2k+p) h_{1} h_{2} \rho_{y}^{\mu_{-}+1} \psi'_{\kappa_{+}} + 2 h_{1} h_{2} \rho_{x} \rho_{y}^{\mu_{-}+2} \psi''_{\kappa+},
\intertext{%
where we set $\psi'_{\kappa_{+}}=\Psi'(\rho_{x} \rho_{y}),\,\psi''_{\kappa_{+}}=\Psi''_{\kappa_{+}}(\rho_{x} \rho_{y})$.
Similarly, one sees that
}
	E_{x} f		&= k h_1 h_2 \rho_y^{\mu_-} \psi_{\kappa_+} + 2 h_1 h_2 \rho_x \rho_y^{\mu_- + 1} \psi'_{\kappa_+},
		\\
	E_{x}^{2} f	&= k^2 h_{1} h_{2} \rho_{y}^{\mu_{-}} \psi_{\kappa_{+}} + 4(k+1) h_{1} h_{2} \rho_{x} \rho_{y}^{\mu_{-}+1} \psi'_{\kappa_+} 
		\\
		&\hspace{1em} + 4 h_{1} h_{2} \rho_{x}^{2} \rho_{y}^{\mu_{-}+2} \psi''_{\kappa_{+}}.
\intertext{Therefore,}
	\hat\Omega_{\mathfrak o_{p}} f	&= \big( E_{x}^{2} + (p-2) E_{x} - 2 \rho_{x} \Delta_{x} \big) \, h_{1} h_{2} \rho_{y}^{\mu_{-}} \psi_{\kappa_{+}}
		\\
		&= \big( k^{2}+(p-2)k \big) \, h_{1} h_{2} \rho_{y}^{\mu_{-}} \psi_{\kappa_{+}} 
		\\
		&= \Big( \big( k+\frac{p-2}2 \big)^{2} - \frac{(p-2)^{2}}4 \Big) f.
\end{align*}
}
Now, substituting $\kappa_{+}=k+p/2$, one obtains \eqref{e:eigenvalues_casimir_k1}.

Next, we turn to \eqref{e:eigenvalues_casimir_k2}.
Similar calculations yield
\allowdisplaybreaks{
\begin{align*}
	\Delta_{y} f &= h_{1} \Delta_{x}( h_{2} \rho_{y}^{\mu_{-}} \Psi_{\kappa_{+}}(\rho_{x} \rho_{y})) 
		\\
		&= \big( (2l+q)\mu_{-} +2\mu_{-}(\mu_{-}-1) \big) h_{1} h_{2} \rho_{y}^{\mu_{-}-1} \psi_{\kappa_{+}} \\
		&\hspace{1em} + \big( (2l+q)+4\mu_{-} \big) h_{1} h_{2} \rho_{x} \rho_{y}^{\mu_-} \psi'_{\kappa_+} 
					+ 2 h_{1} h_{2} \rho_{x}^{2} \rho_{y}^{\mu_{-}+1} \psi''_{\kappa+},
		\\
	E_{y} f		&= (l+2\mu_{-}) h_1 h_2 \rho_y^{\mu_-} \psi_{\kappa_+} + 2 h_1 h_2 \rho_x \rho_y^{\mu_- + 1} \psi'_{\kappa_+},
		\\
	E_{y}^{2} f	&= (l+2\mu_{-})^2 h_{1} h_{2} \rho_{y}^{\mu_{-}} \psi_{\kappa_{+}} 
					+ 4(l+2\mu_{-}+1) h_{1} h_{2} \rho_{x} \rho_{y}^{\mu_{-}+1} \psi'_{\kappa_+} \\
		&\hspace{1em} + 4 h_{1} h_{2} \rho_{x}^{2} \rho_{y}^{\mu_{-}+2} \psi''_{\kappa_{+}},
\intertext{and}
	\hat\Omega_{\mathfrak o_{q}} f &=\big( (l+2\mu_{-})^{2}+(q-2)(l+2\mu_{-})-2\mu_{-}(2l+q+2\mu_{-}-2)\big) f
\end{align*}
}
for $f=h_{1}h_{2}\rho_{y}^{\mu_{-}}\psi_{\kappa_{+}}$.
Substituting $\kappa_{-}=l+q/2,\,\mu_{-}=(1/2)(m+\kappa_{+}-\kappa_{-})$, one obtains \eqref{e:eigenvalues_casimir_k2}. 
\end{proof}

\begin{corollary}
\label{p:key_lemma}
Set $\hat \Xi:= \pi (\gamma_{2} (\Xi) ) \in \PD(V)$. 
Then, for a $K$-homogeneous $f \in \Module{\pm}{m}$ with $\kappa(f)=(\kappa_{+},\kappa_{-})$, one has $\hat \Xi f=\lambda_{\kappa(f)} f$, where $\lambda_{\kappa(f)}$ is a scalar given by
\begin{equation}
\label{e:key_lemma}
	\lambda_{\kappa(f)} = (\kappa_{+} - \kappa_{-})(\kappa_{+} + \kappa_{-} - 2) - \frac{p-q}{p+q} m(m+2).
\end{equation}
In particular, $\lambda_{\kappa(f)}=0$ if $m=0$.
\end{corollary}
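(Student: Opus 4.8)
The plan is to deduce the corollary directly from Lemma~\ref{l:Xi_and_Omegas}, Proposition~\ref{p:eigenvalues_casimir_k} and the Casimir eigenvalue~\eqref{e:eigenvalue_casimir}. First I would apply the representation $\pi$ to the identity of Lemma~\ref{l:Xi_and_Omegas}, which gives, as an operator in $\PD(V)$,
\[
	\hat\Xi = \hat\Omega_{\mathfrak o_p} - \hat\Omega_{\mathfrak o_q} - \frac{p-q}{p+q}\,\hat\Omega_{\g},
\]
since $\hat\Xi = \pi(\gamma_2(\Xi))$ by definition. By Proposition~\ref{p:eigenvalues_casimir_k} and~\eqref{e:eigenvalue_casimir}, a $K$-homogeneous $f \in \Module{\pm}{m}$ with $\kappa(f)=(\kappa_+,\kappa_-)$ is a simultaneous eigenvector of $\hat\Omega_{\mathfrak o_p}$, $\hat\Omega_{\mathfrak o_q}$ and $\hat\Omega_{\g}$, hence of $\hat\Xi$; so $\hat\Xi f = \lambda_{\kappa(f)} f$ with $\lambda_{\kappa(f)}$ the corresponding combination of the three eigenvalues.

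Concretely, Proposition~\ref{p:eigenvalues_casimir_k} gives $\hat\Omega_{\mathfrak o_p} f = \big((\kappa_+-1)^2 - \tfrac14(p-2)^2\big) f$ and $\hat\Omega_{\mathfrak o_q} f = \big((\kappa_--1)^2 - \tfrac14(q-2)^2\big) f$, while~\eqref{e:eigenvalue_casimir} gives $\hat\Omega_{\g} f = \big(m(m+2) - \tfrac14(p+q)^2 + (p+q)\big) f$. Therefore
\[
	\lambda_{\kappa(f)} = \big( (\kappa_+-1)^2 - (\kappa_--1)^2 \big) - \frac{p-q}{p+q}\,m(m+2) + c(p,q),
\]
where $c(p,q) = -\tfrac14(p-2)^2 + \tfrac14(q-2)^2 - \tfrac{p-q}{p+q}\big( -\tfrac14(p+q)^2 + (p+q) \big)$ collects the remaining constants. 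The first bracket equals $(\kappa_+-\kappa_-)(\kappa_++\kappa_--2)$ immediately, so the only computation left is $c(p,q)=0$: one factors $-\tfrac14(p-2)^2 + \tfrac14(q-2)^2 = -\tfrac14(p-q)(p+q-4)$ and, after pulling out $(p+q)$, $-\tfrac{p-q}{p+q}\big(-\tfrac14(p+q)^2 + (p+q)\big) = \tfrac14(p-q)(p+q-4)$, and the two cancel. This yields formula~\eqref{e:key_lemma}.

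For the last assertion, when $m=0$ the term $\tfrac{p-q}{p+q}m(m+2)$ vanishes, and by~\eqref{e:range_kappa+_kappa-} we have $\abs{\kappa_+-\kappa_-} \leqsl m = 0$, so $\kappa_+=\kappa_-$ and $(\kappa_+-\kappa_-)(\kappa_++\kappa_--2)=0$ as well; hence $\lambda_{\kappa(f)}=0$. There is essentially no obstacle in this argument: its entire content is the constant cancellation $c(p,q)=0$, and the point worth emphasizing is that the normalization coefficient $\tfrac{p-q}{p+q}$ built into $\Xi$ through Lemma~\ref{l:Xi_and_Omegas} is exactly what forces all the $p,q$-dependent constants to disappear, leaving an eigenvalue depending only on $\kappa_\pm$ and $m$.
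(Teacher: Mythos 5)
Your proposal is correct and follows exactly the paper's argument: apply $\pi$ to the identity of Lemma~\ref{l:Xi_and_Omegas}, read off the three eigenvalues from Proposition~\ref{p:eigenvalues_casimir_k} and \eqref{e:eigenvalue_casimir}, check that the $p,q$-dependent constants cancel, and use \eqref{e:range_kappa+_kappa-} to get $\kappa_+=\kappa_-$ when $m=0$. The only difference is that you spell out the constant cancellation $c(p,q)=0$, which the paper leaves implicit.
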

\begin{proof}
The first statement immediately follows from Lemma \ref{l:Xi_and_Omegas} and Proposition \ref{p:eigenvalues_casimir_k}.
As for the second, we note that if $m=0$ then $\kappa_{+}=\kappa_{-}$ by \eqref{e:range_kappa+_kappa-}.
%\begin{align*}
%	\lambda_{\kappa(f)} &= \big(k^{2} + (p-2)k \big) \\
%						& \hspace{1em} - \Big( (l+2\mu_{-})^{2} + (q-2)(l+2\mu_{-}) - 2 \mu_{-}(2l+q+2\mu_{-}-2) \Big) \\  
%						& \hspace{1em} -\frac{p-q}{p+q} \big( m(m+2) - \frac14 (p+q)^{2}+ (p+q) \big)
%					\\
%						&= \Big( k+\frac{p-2}2 \Big)^{2} - \frac{(p-2)^{2}}4 \\
%						& \hspace{1em} -\Big( l + \frac{q-2}2 + 2\mu_{-} \Big)^{2} + \frac{(q-2)^{2}}4 + 2\mu_{-} ( 2l + {q} + 2\mu_{-} - 2 ) \\
%						& \hspace{1em} + \frac14(p-q)(p+q) -(p-q) - \frac{p-q}{p+q} m(m+2)
%\end{align*}
%Now, substituting $\kappa_{+}=k+p/2,\,\kappa_{-}=l+q/2,\,\mu_{-}=(1/2)(m+\kappa_{+}-\kappa_{-})$, one obtains
%\begin{align*}
%	\lambda_{\kappa(f)} &= (\kappa_{+} - 1 )^{2} - ( \kappa_{-} - 1 + m + \kappa_{+} - \kappa_{-} )^{2} \\
%						& \hspace{1em} + ( m + \kappa_{+} - \kappa_{-} ) ( 2\kappa_{-} + m+\kappa_{+}-\kappa_{-} - 2 )  \\  
%						& \hspace{1em} - \frac{p-q}{p+q} m(m+2) \\
%						&= \kappa_{+}^{2} - \kappa_{-}^{2} - 2(\kappa_{+} - \kappa_{-}) - \frac{p-q}{p+q} m(m+2).
%\qedhere
%\end{align*}
%
\end{proof}

In what follows, let $\Anni{\pm}{m}$ denote the annihilator in $U(\g)$ of our $(\g,K)$-module $\Module{\pm}{m}$ for short: 
\[
	\Anni{\pm}{m}:=\Ann_{U(\g)} \Module{\pm}{m}.
\]
Correspondingly, set $I^{\pm}(m):=\bigoplus_{j=0}^{\infty} (\Anni{\pm}{m} \cap U_{j}(\g))/(\Anni{\pm}{m} \cap U_{j-1}(\g))$. 

%%%%%%%%%%%%%%%%%%%%%%%%%%%%%%%%%%%%%%%%%%%%%%%%%%%%%%%%%%
%
%    Main Result
%    
%%%%%%%%%%%%%%%%%%%%%%%%%%%%%%%%%%%%%%%%%%%%%%%%%%%%%%%%%%

\begin{theorem}
\label{t:main result}
The annihilator $\Anni{\pm}{m}$ is the Joseph ideal if ond only if $m=0$.
\end{theorem}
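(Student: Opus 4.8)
The plan is to apply Garfinkle's criterion (Theorem~\ref{t:garfinkle}) to the annihilator $\Anni{\pm}{m}$, which has infinite codimension because $\Module{\pm}{m}$ is an infinite-dimensional irreducible $(\g,K)$-module; so everything comes down to deciding when the $\g$-submodule $\sigma(I^{\pm}(m))\cap S^{2}(\g)$ of $S^{2}(\g)=V_{2\alpha}\oplus E^{\g}_{\varnothing}\oplus E^{\g}_{\ydiagram{1,1,1,1}}\oplus E^{\g}_{\ydiagram{2}}$, whose four summands are irreducible and pairwise non-isomorphic, equals $W=E^{\g}_{\varnothing}\oplus E^{\g}_{\ydiagram{1,1,1,1}}\oplus E^{\g}_{\ydiagram{2}}$. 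The key object is $\Xi$, a nonzero vector of the summand $E^{\g}_{\ydiagram{2}}\subseteq W$. I also record, for orientation, that $E^{\g}_{\varnothing}\subseteq\sigma(I^{\pm}(m))\cap S^{2}(\g)$ for every $m$: by \eqref{e:eigenvalue_casimir} the element $\Omega_{\g}$ minus its eigenvalue lies in $\Anni{\pm}{m}$, and by Lemma~\ref{l:natQ_and_Q} its principal symbol is $\tfrac12 Q$, which spans $E^{\g}_{\varnothing}$.

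The heart of the proof is the equivalence
\[
\Xi\in\sigma(I^{\pm}(m))\cap S^{2}(\g)\qquad\Longleftrightarrow\qquad m=0.
\]
For the forward direction, when $m=0$ Corollary~\ref{p:key_lemma} gives $\hat\Xi=0$ on $\Module{\pm}{0}$, so $\gamma_{2}(\Xi)\in\Anni{\pm}{0}$, and since $(\sigma_{2}\circ p_{2}\circ\gamma_{2})(\Xi)=\Xi$ its principal symbol $\Xi$ lies in $\sigma(I^{\pm}(0))\cap S^{2}(\g)$.

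For the reverse direction I argue by contradiction: assume $m\geqsl1$ and that $\Xi$ is a principal symbol of some element of $\Anni{\pm}{m}$, so $\gamma_{2}(\Xi)+a\in\Anni{\pm}{m}$ for some $a\in U_{1}(\g)$, and write $a=c+Y_{\mathfrak k}+Y_{\mathfrak p}$ with $c\in\C$, $Y_{\mathfrak k}\in\mathfrak k$, $Y_{\mathfrak p}\in\mathfrak p$. Apply $\gamma_{2}(\Xi)+a$ to a $K$-homogeneous $f$ lying in the (multiplicity-one) $K$-type $\Harm^{k}(\R^{p})\otimes\Harm^{l}(\R^{q})$, so $(\kappa_{+},\kappa_{-})=(k+p/2,\,l+q/2)$, and use $\hat\Xi f=\lambda_{\kappa(f)}f$ (Corollary~\ref{p:key_lemma}): the summands $\hat\Xi f$, $cf$ and $\hat Y_{\mathfrak k}f$ have $K$-type $(\kappa_{+},\kappa_{-})$, while $\hat Y_{\mathfrak p}f$ has $K$-type shifted by $(\pm1,\pm1)$ (cf.~\eqref{e:p-action_on_HWV}), so the two parts must vanish separately. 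The $\mathfrak p$-part forces $\hat Y_{\mathfrak p}=0$ on $\Module{\pm}{m}$, i.e.\ $Y_{\mathfrak p}\in\Anni{\pm}{m}\cap\g=0$, since $\g$ is simple and $\Module{\pm}{m}\neq0$. The remaining part reads $\hat Y_{\mathfrak k}f=-\bigl(\lambda_{\kappa(f)}+c\bigr)f$, so $\hat Y_{\mathfrak k}$ acts by a scalar on $\Harm^{k}(\R^{p})\otimes\Harm^{l}(\R^{q})$. Writing $Y_{\mathfrak k}=Y_{p}+Y_{q}$ with $Y_{p}\in\mathfrak o_{p}$ and $Y_{q}\in\mathfrak o_{q}$, acting through the respective tensor factors, and evaluating on $v\otimes w$ with $w$ an eigenvector of $\hat Y_{q}$ in $\Harm^{l}(\R^{q})$, one finds that $\hat Y_{p}$ acts by a scalar on $\Harm^{k}(\R^{p})$. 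But an element of $\mathfrak o_{p}$ that acts by a scalar on $\Harm^{k}(\R^{p})$ for some $k\geqsl1$ must be $0$ (its nilpotent part acts nilpotently, its semisimple part carries at least two weights there), and such $k$ occur in $\Module{\pm}{m}$ by the $K$-type formula of Facts~\ref{t:K-type formula}; hence $Y_{p}=0$, and symmetrically $Y_{q}=0$. Therefore $a=c$ and $\hat\Xi=-c$ acts as a scalar on $\Module{\pm}{m}$, i.e.\ $\lambda_{\kappa(f)}$ in \eqref{e:key_lemma} does not depend on $\kappa(f)$. This fails for $m\geqsl1$: since $m\in\Lambda_{m}$, there are $K$-types with $\kappa_{+}-\kappa_{-}=m$ and $\kappa_{+}+\kappa_{-}$ arbitrarily large, on which $\lambda_{\kappa(f)}=m(\kappa_{+}+\kappa_{-}-2)-\tfrac{p-q}{p+q}m(m+2)$ takes infinitely many values --- a contradiction. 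Thus $\Xi\notin\sigma(I^{\pm}(m))\cap S^{2}(\g)$ whenever $m\geqsl1$.

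It remains to assemble the conclusion. Since $\sigma(I^{\pm}(m))\cap S^{2}(\g)$ is a submodule and $\Xi$ is a nonzero vector of the irreducible $E^{\g}_{\ydiagram{2}}$, the equivalence above gives $E^{\g}_{\ydiagram{2}}\subseteq\sigma(I^{\pm}(m))\cap S^{2}(\g)$ precisely when $m=0$. If $m\geqsl1$, then $p+q\geqsl8$, so $\g$ is a simple Lie algebra not of type $A$ and Garfinkle's criterion applies; since $\sigma(I^{\pm}(m))\cap S^{2}(\g)$ omits the summand $E^{\g}_{\ydiagram{2}}$ of $W$, it is a proper submodule of $W$, and hence $\Anni{\pm}{m}$ is not the Joseph ideal. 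If $m=0$, then by \cite{min_rep} the module $\Module{\pm}{0}$ is the underlying $(\g,K)$-module of the minimal representation of $\rmO(p,q)$, whose annihilator is the Joseph ideal; hence $\Anni{\pm}{0}$ is the Joseph ideal. The step I expect to be the main obstacle is the reverse direction of the central equivalence: excluding, for $m\geqsl1$, any first-order correction $a$ with $\gamma_{2}(\Xi)+a\in\Anni{\pm}{m}$. That step forces one to control the $\mathfrak k$-action $K$-type by $K$-type, which is where the multiplicity-one structure of the $K$-types, the behaviour of $\mathfrak o_{p}$ and $\mathfrak o_{q}$ on spherical harmonics, and the precise non-constancy of $\lambda_{\kappa(f)}$ all come in; the $m=0$ half, resting on the prior identification of $\Module{\pm}{0}$ with the minimal representation, is comparatively soft.
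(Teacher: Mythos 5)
Your argument is correct, and for the substantive direction ($m\geqsl 1$ implies not the Joseph ideal) it follows the paper's route exactly: reduce via Theorem~\ref{t:garfinkle} to deciding whether $\Xi\in\sigma(I^{\pm}(m))\cap S^{2}(\g)$, allow a first-order correction $\lambda+Y$, kill the $\mathfrak p$-component of $Y$ by comparing $K$-types against \eqref{e:p-action_on_HWV}, and rule out a $\mathfrak k$-component because $\lambda_{\kappa(f)}$ is non-constant when $m\neq0$. In fact you supply more detail than the paper, which merely asserts ``this is impossible'' at the last step: your observations that $Y_{\mathfrak p}\in\Anni{\pm}{m}\cap\g=0$ by simplicity of $\g$, that an element of $\mathfrak o_{p}$ acting by a scalar on some $\Harm^{k}(\R^{p})$ with $k\geqsl 1$ must vanish, and that $\lambda_{\kappa(f)}=m(\kappa_{+}+\kappa_{-}-2)-\tfrac{p-q}{p+q}m(m+2)$ varies along the occurring $K$-types with $\kappa_{+}-\kappa_{-}=m$ are precisely the justifications the paper leaves implicit. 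Where you genuinely diverge is the $m=0$ direction: you import from \cite{min_rep} the identification of $\Module{\pm}{0}$ with the minimal representation and conclude that its annihilator is the Joseph ideal from that external fact, whereas the paper verifies Garfinkle's criterion directly by exhibiting all three summands of $W$ inside $\sigma(I^{\pm}(0))\cap S^{2}(\g)$: $E^{\g}_{\varnothing}$ via the Casimir (which you do), $E^{\g}_{\ydiagram{2}}$ via $\hat\Xi=0$ when $m=0$ (which you prove but then do not use), and $E^{\g}_{\ydiagram{1,1,1,1}}$ via the computation $\pi(\Phi^{-1}(\nat{S}_{ijkl}))=0$, which you omit entirely. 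Your shortcut is logically admissible given the cited result, and it even quietly avoids the delicate point that $\mathfrak o_{6}$ is of type $A$ (relevant only when $p+q=6$, hence only when $m=0$); but it makes the ``if'' half rest on the prior identification of the minimal representation rather than on the self-contained symbol computation that is the point of the paper, so a proof in the paper's spirit would add the check that $E^{\g}_{\ydiagram{1,1,1,1}}\subset\sigma(I^{\pm}(m))\cap S^{2}(\g)$.
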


\begin{proof}
Recall that our $\g$-invariant complement $W \subset S^{2}(\g)$ is given by
\[
	W = {E}^{\g}_{\varnothing} \oplus {E}^{\g}_{\ydiagram{1,1,1,1}} \oplus {E}^{\g}_{\ydiagram{2}}.
\]
We must show that $W = \sigma(I^{\pm}(m)) \cap S^{2}(\g)$ if and only if $m=0$ by Theorem \ref{t:garfinkle}.
However, the $\g$-invariance of both sides reduces the problem to showing that $W \subset \sigma(I^{\pm}(m)) \cap S^{2}(\g)$ if and only if $m=0$. %\textcolor{red}{(?)}.

Since $\Omega_{\g} \in U_{2}(\g)$ acts on $\Module{\pm}{m}$ by the scalar given by \eqref{e:eigenvalue_casimir}, say $\lambda_{0}$, 
\[
	Q=\sigma_{2}(p_{2}( 2(\Omega_{\g} - \lambda_{0})) ) \in \sigma(I^{\pm}(m)),
\]
and hence we have
\begin{equation}
	{E}^{\g}_{\varnothing} \subset \sigma(I^{\pm}(m)) \cap S^{2}(\g).
\end{equation}

Next, a simple calculation shows that $\pi(\Phi^{-1}(\nat{S}_{ijkl}))=0$ in $\PD(V)$ for all $i<j<k<l$.
Namely, we have 
\begin{equation}
	{E}^{\g}_{\ydiagram{1,1,1,1}} \subset \sigma(I^{\pm}(m)) \cap S^{2}(\g).
\end{equation}

Finally, we show that
\begin{equation}
\label{e:criterion4E_(2)}
	{E}^{\g}_{\ydiagram{2}} \subset \sigma(I^{\pm}(m)) \cap S^{2}(\g) \quad \text{if and only if} \quad m=0.
\end{equation}
Since $E^{\g}_{\ydiagram{2}}$ is irreducible, in order that \eqref{e:criterion4E_(2)} holds, it suffices to show
\begin{equation}
\label{e:criterion4E_(2)_2}
	\Xi \in \sigma(I^{\pm}(m)) \cap S^{2}(\g) \quad \text{if and only if} \quad m=0.
\end{equation}
%If $m=0$, then $\Xi \in I$.
% If $m \ne 0$, then $\Xi \notin I$.
%Let $m \ne 0$.
Assume that $\Xi$ is in $\sigma(I^{\pm}(m)) \cap S^{2}(\g)$.
Then there exists an element $u \in \Anni{\pm}{m} \cap U_{2}(\g)$ satisfying $\Xi=\sigma_{2}(p_2(u))$ and 
\begin{equation}
\label{e:property_u}
	\pi(u) f = 0 \quad \text{for all}\; f \in \Module{\pm}{m}.
\end{equation}
Since $\Xi = \sigma_2(p_2(\gamma_2(\Xi)))$, there exists an element $u_1 \in U_1(\g)$ such that $\gamma_2(\Xi)=u+u_1$.
Moreover, $u_1$ can be written as $u_1=\lambda+Y$ with $\lambda \in \C$ and $Y \in \g$ since $U_1(\g)=\C \oplus \g$.
Now, it follows from \eqref{e:property_u} and Corollary \ref{p:key_lemma} that
\begin{equation}
\label{e:key_relation}
\begin{aligned}
	\pi(Y) f &= (\lambda_{\kappa(f)}-\lambda)f 
\\
		&=\Big( (\kappa_{+}-\kappa_{-})(\kappa_{+}+\kappa_{-}+2) -\frac{p-q}{p+q}m(m+2) - \lambda \Big) f
\end{aligned}
\end{equation}
for all $K$-homogeneous $f \in \Module{\pm}{m}$ with $\kappa(f)=(\kappa_{+},\kappa_{-})$.
In view of the right-hand side of \eqref{e:key_relation}, the $\mathfrak p$-part action given in \eqref{e:p-action_on_HWV} forces this $Y$ to be in $\mathfrak k$.
Thus, letting $Y=Y_{1}+Y_{2}$, $Y_{1} \in \mathfrak{o}_{p},\,Y_{2} \in \mathfrak{o}_{q}$, be the decomposition corresponding to $\mathfrak k=\mathfrak o_{p} \oplus \mathfrak o_{q}$, one sees that \eqref{e:key_relation} is equivalent to the fact that 
$\pi(Y_{1})$ acts on $\Harm^{k}(\R^{p})$ by a scalar and $\pi(Y_{2})$ acts on $\Harm^{l}(\R^{q})$ by a scalar, 
where $\kappa_{+}=k+\frac{p}2$ and $\kappa_{-}=l+\frac{q}2$.
If $m=0$ then $\Xi \in \sigma(I^{\pm}(m))$; in fact, one can take $Y=0$ and $\lambda=0$ since $\lambda_{\kappa(f)}=0$.
On the other hand, if $m \ne 0$, this is impossible, and hence $\Xi \notin \sigma(I^{\pm}(m))$. 
Therefore, \eqref{e:criterion4E_(2)_2} holds.
\end{proof}

%%%%%%%%%%%%%%%%%%%%%%%%%%%%%%%%%%%%%%%%%%%%%%%%%%%%%%%%%%
%
%    Appendices
%    
%%%%%%%%%%%%%%%%%%%%%%%%%%%%%%%%%%%%%%%%%%%%%%%%%%%%%%%%%%

\appendix

\section{Irreducible decomposition of $S^{2}(\mathfrak o_{n})$}
\label{s:app_irrep_S2}

Let $E$ denote the natural representation of $\gnat$, i.e., $E=\C^{n}$ equipped with the standard bilinear form $\beta_{0}$, where $\beta_{0}(u,v)=\tp{u}v$ ($u,v \in E$).
Then, 
\begin{equation*}
	\varphi: \mwedge{2} E \to \gnat, \; v \wedge w \mapsto -\iota(\cdot) (v \wedge w)
\end{equation*}
provides an isomorphism between $\gnat$-modules, where $\iota$ denotes the contraction with respect to $\beta_{0}$.
Hence, we have $S^{2}(\mwedge{2} E) \simeq S^{2}(\mathfrak o_{n})$.

It is well known that any irreducible representation of $\gnat$ is realized in the subspace of traceless tensors of $E^{\otimes k}$ for some $k \in \N$ and is parameterized by Young diagram.
Noting that $\mwedge{2} E = E_{\ydiagram{1,1}}$, it follows that $\mwedge{2} E \otimes \mwedge{2} E$ decomposes as 
\begin{equation}
\label{e:irr_decomp_A2E_and_A2E}
	\mwedge{2} E \otimes \mwedge{2} E = E_{\varnothing} \oplus E_{\ydiagram{1,1,1,1}} \oplus E_{\ydiagram{2}} \oplus E_{\ydiagram{2,2}} \oplus E_{\ydiagram{2,1,1}} \oplus E_{\ydiagram{1,1}},
\end{equation}
which can be verified by calculating Littlewood-Richardson coefficients in terms of universal characters (see e.g.~\cite{OkadaBook06_2}).
Note also that $S^{2}(\mwedge{2} E)$ is equal to the direct sum of the first four factors in \eqref{e:irr_decomp_A2E_and_A2E}:
\begin{equation}
\label{e:irr_summand}
	E_{\varnothing}, 			\quad
	E_{\ydiagram{1,1,1,1}},		\quad
	E_{\ydiagram{2}},			\quad
	E_{\ydiagram{2,2}}.
\end{equation}
One can easy verify that $E_{\ydiagram{2,2}}$ is the highest weight module with highest weight $2 \alpha$, where $\alpha$ is the highest root of $\gnat$.

Next, let us describe the first three irreducible summands in \eqref{e:irr_summand} concretely.
Let $\{e_{i};i=1,\dots,n\}$ denote the standard basis of $\C^{n}$, i.e., $e_{i}=\tp{(0,\dots,\overset{i\text{-th}}{1},\dots,0)}$.
Then, one has
\begin{equation}
\begin{aligned}
	E_{\varnothing} &=\Big\< \sum_{i=1}^{n} {e_i}^{2} \Big\>_{\C} \,,
\\
	E_{\ydiagram{1,1,1,1}} & =\big\< e_i \wedge e_j \wedge e_k \wedge e_l ; 1 \leqsl i<j<k<l \leqsl n \big\>_{\C} \,,
\\
	E_{\ydiagram{2}} &= \Big\< e_i e_j-\frac1n \delta_{i,j} \sum_{k=1}^n {e_k}^2; i,j \in [n] \Big\>_{\C} \,.
\end{aligned}
\end{equation}
Thus, if one defines injective $\rmO_{n}$-equivariant linear maps by 
\begin{equation}
\begin{aligned}
	\varphi_{\varnothing} 			&: E_{\varnothing} \to S^2(\gnat), &
	& \quad \sum_{i=1}^{n} {e_i}^{2} \mapsto \nat{Q},
\\
	\varphi_{\ydiagram{1,1,1,1}} 	&: E_{\ydiagram{1,1,1,1}} \to S^2(\gnat), &
	& \quad e_{i} \wedge e_{j} \wedge e_{k} \wedge e_{l} \mapsto \nat{S}_{i j k l},
\\
	\varphi_{\ydiagram{2}} 		&: E_{\ydiagram{2}} \to S^2(\gnat), &
	& \quad e_{i} e_{j}-\frac1n \delta_{i j} \sum_{k=1}^{n} {e_k}^{2} \mapsto \nat{S}_{i j},
\end{aligned}
\end{equation}
with
\begin{align}
	\nat{Q} &=\sum_{i,k \in [n]}  M_{i,k} \otimes M_{k,i} = - \sum_{i,k \in [n]}  M_{i,k} \otimes M_{i,k},
\\
	\nat{S}_{i j k l} 
			&= \frac12 \big( M_{i,j} \otimes M_{k,l} - M_{i,k} \otimes M_{j,l} + M_{i,l} \otimes M_{j,k} \notag \\ 
			&\hspace{2em} + M_{k,l} \otimes M_{i,j} - M_{j,l} \otimes M_{i,k} + M_{j,k} \otimes M_{i,l} \big),
\\
	\nat{S}_{i j} 
			&=\frac12 \sum_{k \in [n]} ( M_{i,k} \otimes M_{k,j} + M_{k,j} \otimes M_{i,k} ) - \frac1{n} \delta_{i,j} \nat{Q},
\end{align}
then one sees that the irreducible decomposition of $S^{2}(\gnat)$ is given by
\begin{equation}
\label{e:irrep_decomp_S2(o_n)}
	S^{2}(\gnat) = \nat{E}_{\varnothing} \oplus \nat{E}_{\ydiagram{1,1,1,1}} \oplus \nat{E}_{\ydiagram{2}} \oplus \nat{E}_{\ydiagram{2,2}},
\end{equation}
where
\begin{equation}
\begin{aligned}
	\nat{E}_{\varnothing} &=\< \nat{Q} \>_{\C},
\\
	\nat{E}_{\ydiagram{1,1,1,1}} & =\< \nat{S}_{ijkl} ; 1 \leqsl i<j<k<l \leqsl n \>_{\C}, 
\\
	\nat{E}_{\ydiagram{2}} &= \< \nat{S}_{ij}; i,j \in [n] \>_{\C},
\end{aligned}
\end{equation}
and $\nat{E}_{\ydiagram{2,2}}$ is the image of $E_{\ydiagram{2,2}}$ under the isomorphism $S^{2}(\mwedge{2} E) \simeq S^{2}(\mathfrak o_{n})$.

%BibTeX users please use one of
\bibliographystyle{amsplain}
%\bibliography{rep,geom}

\begin{thebibliography}{10}

\bibitem{BZ91}
B.~Binegar and R.~Zierau, \emph{Unitarization of a singular representation of\,
  {${\rm SO}(p,q)$}}, Comm. Math. Phys. \textbf{138} (1991), no.~2, 245--258.
  \MR{1108044}

\bibitem{GS04}
W.-T.~Gan and G.~Savin, \emph{Uniqueness of {J}oseph ideal}, Math. Res.
  Lett. \textbf{11} (2004), no.~5-6, 589--597. \MR{2106228}

\bibitem{Garfinkle82}
D.~Garfinkle, \emph{A new construction of the joseph ideal},  (1982), 
  Thesis (Ph.D.), Massachusetts Institute of Technology. \MR{2941017}

\bibitem{GW98}
R.~Goodman and N.~Wallach, \emph{Representations and invariants of the
  classical groups}, Encyclopedia of Mathematics and its Applications, vol.~68,
  Cambridge University Press, Cambridge, 1998. \MR{1606831}

\bibitem{harm_oscil}
T.~Hashimoto, \emph{The moment map on symplectic vector space and
  oscillator representation}, Kyoto J. Math. \textbf{57} (2017), no.~3,
  553--583. \MR{3685055}

\bibitem{min_rep}
\bysame, \emph{{$({\mathfrak g}, K)$}-module of\, {$\mathrm{O}(p, q)$} associated
  with the finite-dimensional representation of {$\mathfrak{sl}_2$}}, Internat.
  J. Math. \textbf{32} (2021), no.~2, Paper No. 2150009, 23. \MR{4223528}

\bibitem{Howe79}
R.~Howe, \emph{On some results of {S}trichartz and of {R}allis and
  {S}chiffman}, J. Funct. Anal. \textbf{32} (1979), no.~3, 297--303. \MR{538856
  (80f:22014)}

\bibitem{Joseph76}
A.~Joseph, \emph{The minimal orbit in a simple {L}ie algebra and its associated
  maximal ideal}, Ann. Sci. \'{E}cole Norm. Sup. (4) \textbf{9} (1976), no.~1,
  1--29. \MR{404366}

\bibitem{Kobayashi_Orsted_03-2}
T.~Kobayashi and B.~{\O}rsted, \emph{Analysis on the minimal
  representation of {$\mathrm O(p,q)$}. {II}. {B}ranching laws}, Adv. Math.
  \textbf{180} (2003), no.~2, 513--550. \MR{2020551 (2004k:22018b)}

\bibitem{OkadaBook06_2}
S.~Okada, \emph{Representation theory of the classical groups and
  combinatorics {II}}, Baihukan, 2006, (in japanese).

\bibitem{RS80}
S.~Rallis and G.~Schiffmann, \emph{Weil representation. {I}.
  {I}ntertwining distributions and discrete spectrum}, Mem. Amer. Math. Soc.
  \textbf{25} (1980), no.~231, iii+203. \MR{567800 (81j:22007)}

\bibitem{ZH97}
C.-B.~Zhu and J.-S.~Huang, \emph{On certain small representations of
  indefinite orthogonal groups}, Represent. Theory \textbf{1} (1997), 190--206
  (electronic). \MR{1457244}

\end{thebibliography}

\providecommand{\bysame}{\leavevmode\hbox to3em{\hrulefill}\thinspace}
\providecommand{\MR}{\relax\ifhmode\unskip\space\fi MR }
% \MRhref is called by the amsart/book/proc definition of \MR.
\providecommand{\MRhref}[2]{%
  \href{http://www.ams.org/mathscinet-getitem?mr=#1}{#2}
}
\providecommand{\href}[2]{#2}

\end{document}